\numberwithin{equation}{section} \makeatletter
\renewcommand{\subsection}{\@startsection
{subsection}{2}{0mm}{\baselineskip}{-0.25cm}
{\normalfont\normalsize\bf}} \makeatother
\newtheorem{theorem}{Theorem}[section]
\newtheorem{definition}[theorem]{Definition}
\newtheorem{remark}[theorem]{Remark}
\newtheorem{proposition}[theorem]{Proposition}
\newtheorem{example}[theorem]{Example}
\newtheorem{assumption}[theorem]{Assumption}
\def \F {\mathcal F}
\def \P {\mathbf P}
\def \R {\mathbb R}
\def \bF {\mathbb F}
\def \bE {\mathbb E}
\newcommand{\ud}{\mathrm d}
\DeclareMathOperator*{\essinf}{ess\,inf}
\DeclareMathOperator*{\esssup}{ess\,sup}
\def \a {{(1)}}
\def \b {{(2)}}
\def \a {{(1)}}
\def \b {{(2)}}
\begin{document}
\author[C.~Ceci]{Claudia  Ceci}
\address{Claudia  Ceci, Department MEMOTEF,
University of Rome Sapienza, Via del Castro Laurenziano, 9,
Rome, Italy.}\email{claudia.ceci@uniroma1.it}
\author[A.~Cretarola]{Alessandra Cretarola\, 
}
\address{Alessandra Cretarola, Department of Economic Studies,
 University "G. D'Annunzio'' of Chieti-Pescara,
Viale Pindaro, 42, I-65127 Pescara, Italy.}\email{alessandra.cretarola@unich.it}

\title{Self-protection and self-insurance for general risk models via a BSDE approach}


\begin{abstract}
We investigate an optimal prevention and insurance problem in a general risk setting, where a representative agent is exposed to potential losses. The agent adopts a strategy that combines self-protection, aimed at reducing the frequency of claims, and self-insurance, aimed at mitigating their severity.
The problem which consists in maximizing the expected exponential utility of terminal wealth is formulated as a stochastic control problem and solved by means of  backward stochastic differential equations (BSDEs).  
Our approach, essentially based on a general Bellman Optimality  Principle (see \cite{ek81} among others), does not require specification of the underlying filtration structure, making it applicable to a broad class of risk models, including Markov-modulated, stochastic factor, Cox-shot noise and self-excited models. We extend recent results by 
\cite{bhk2023,BCCS_2025}, which focused on self-protection in specific models, by allowing for both self-protection and self-insurance within a unified and general framework.
    \end{abstract}

\maketitle

{\bf Keywords}: Self-Protection, Self-Insurance, Stochastic Control, BSDEs.

{\bf AMS Classification}: 
60G57, 60J75, 93E20, 91B30, 65C30


\section{Introduction}
Self‐protection, or primary prevention, comprises proactive interventions, such as reinforcing riverbanks against flooding, installing smoke detectors to avert fire damage, or adopting healthier lifestyles to forestall illness, that serve to lower the probability of an adverse event. Instead, self‐insurance, or secondary prevention, does not seek to prevent the event itself but rather aims to reduce the severity of the loss, and then mitigate its financial consequences: for instance, by selecting higher deductibles, maintaining dedicated contingency reserves, or purchasing retention layers in one’s insurance policy. Strong prevention efforts can reduce the need for large financial reserves, but some risks, like major earthquakes, cyber-attacks, or pandemics, can never be entirely avoided. In those cases, primary and secondary prevention interact closely, making it essential for individuals, firms, and policymakers to understand their trade-offs and synergies when allocating limited resources. 
\\
As observed in \cite{bhk2019}, the first systematic study of prevention strategies, whether via self-insurance or self-protection, appears in the seminal paper by \cite{eb72}. Their comparative-statics analysis yields two key conclusions:

(A) Market insurance and self-insurance are substitutes: a higher insurance price prompts greater prevention effort to reduce loss size.

(B) Market insurance and self-protection can be complements: a more expensive insurance premium leads to less effort devoted to lowering the probability of loss.

We also remark that under distortion risk measures in \cite{czz2025} the authors derive explicit formulas for the optimal retention level and prevention intensity, showing that both controls increase monotonically as the market-insurance loading rises, thus confirming (A) and, with respect to (B), finding no evidence of complementarity: prevention effort never falls as premiums become more expensive. While the authors in \cite{bhk2023,BCCS_2025}  focus on a static market insurance and dynamic self-protection actions,  this paper  replaces  static market insurance with dynamic self-insurance strategies. \\ 
Precisely, in this paper we investigate an optimal prevention–insurance problem in a fully general risk framework, where an agent dynamically allocates efforts between reducing claim frequency with self-protection actions and limiting claim severity by applying self-insurance strategies. For example, flood risk can be mitigated by reinforcing levees (self-protection) and reducing exposure in high-risk zones (self-insurance), while fire risk combines the installation of smoke detectors with fire-resistant construction and suppression systems.
By combining self-protection and self-insurance in a unified control model, we capture the trade-offs that arise in applications ranging from flood management and cybersecurity to public health planning. Our setting accommodates a broad class of risk models characterized by stochastic claim arrival intensities, including Markov-modulated processes, stochastic-factor models, Cox shot-noise and self-exciting processes, and permits both controls to adapt over time to evolving risk exposures.
This contrasts with the vast majority of the literature on prevention, where the loss process is typically modeled as a compound Poisson process (\cite{Loisel1, Loisel2,bhk2019}).
\\
The problem under consideration, which consists in maximizing the expected exponential utility of terminal wealth, is formulated as a stochastic control problem and solved by means of  backward stochastic differential equations (BSDEs).  Our BSDE approach rests on a general Bellman optimality principle (see \cite{ek81}  among others) and  does not require specification of the underlying filtration structure.  By avoiding reliance on strict Markovianity or differentiability of the value function, this methodology accommodates an extensive range of risk dynamics.\\
Previous contributions by \cite{bhk2023,BCCS_2025} employed a BSDE approach to address the exponential utility maximization problem under pure self-protection within specific model settings where a martingale representation theorem holds.
We extend their results to a fully general risk framework in which the decision maker simultaneously controls both claim‐arrival intensity and claim severity. 
We consider a general filtration by incorporating a term in the BSDE solution that is a martingale orthogonal to the compensated integer-valued martingale measure associated with the cumulative loss process. The study of BSDEs driven by a general martingale under a general filtration, without assuming the martingale representation property,  was initiated in the seminal work of \cite{ElKaroui_Huang}, and has since been extended in various directions, including the recent contributions by \cite{PPS2018}.\\
 Under minimal regularity conditions, we prove a verification theorem, see Theorem \ref{VerThm}, and establish the existence of the BSDE solution in Theorem \ref{thm:bsde} by relying on the theory of BSDEs that satisfies stochastic Lipschitz conditions (see e.g. \cite{ElKaroui_Huang} and \cite{PPS2018}). Furthermore, we discuss the pure self-protection case and show how our framework both recovers and generalizes the results of \cite{bhk2023,BCCS_2025}. Finally, we
derive more explicit characterizations of the optimal joint self‐protection and self‐insurance strategy in a Markovian framework.
Moreover, a closed formula for the value function in the case of compound Poisson process is provided.\\
The remainder of the paper is organized as follows. In Section \ref{sec:model} we introduce the general loss‐dynamic framework, define admissible prevention efforts, and formulate the exponential‐utility maximization problem. Section \ref{sec:BSDE} discusses the stochastic control problem via BSDEs, proves a verification theorem and existence of a unique solution, and identifies some key classes of risk models that satisfy the assumptions of our main result. Section \ref{sec:self-prot} is devoted to the pure self‐protection case. 
Section \ref{sec:markov} focuses on the Markovian setting, which enables an explicit characterization of optimal controls, 
in particular under bounded claim arrival intensity. 
Finally, the proof of Theorem  \ref{thm:bsde} can be found in Appendix \ref{app:proof}.

\section{Modeling framework}\label{sec:model}

Let $( \Omega , \mathcal{F}, \P^0 ; \mathbb{F} )$ be a filtered probability space and assume that the filtration $\mathbb{F} =\{\mathcal{F} _t, \ t \in [0,T] \}$ satisfies the usual hypotheses. Here, $T>0$ is a finite time horizon. 

An agent facing potential losses can take proactive measures by adopting a prevention strategy aimed at mitigating both the frequency and severity of claims. The agent's actions are modeled through a prevention effort, represented as an $\mathbb{F}$-predictable bidimensional process $\mathbf u=\{(u_t^\a, u_t^\b),\ t \in [0,T]\}$, taking values into the set $U:=[0, \zeta_1] \times [0,\zeta_2]$, where $\zeta_i > 0$, $i=1,2$, are fixed parameters.

Specifically, the proposed model includes two intervention strategies: the first focuses on reducing the intensity and frequency of claims, while the second aims to lower the size of the claims.

Prevention efforts come with two monetary cost functions $c_1$ e $c_2$, that are assumed to be continuous and increasing in $[0,\zeta_1]$ and $[0,\zeta_2]$, respectively, and satisfying $c_1(0)=c_2(0)=0$. 
The agent can also invest in a bank account with constant interest rate $r \in \R^+$ and determines her/his level of effort by maximizing her/his expected utility at terminal time $T$. 
The cumulative losses are described by the pure jump process
$J=\{J_t,\ t\in [0,T]\}$ defined as
\begin{equation*}
    J_t = \sum_{i=1}^{N_t} Z_i,\quad t \in [0,T],
\end{equation*}
where $\{Z_i\}_{i \geq 1}$ is a sequence of i.i.d. random variables with a cumulative distribution function $F: (0,+\infty) \to [0,1]$  satisfying $\bE^{\P^0}[Z^2]<+\infty$, and $N=\{N_t,\ t \in [0,T]\}$, with $N_t = \sum_{i \geq 1}  \mathbbm{1}_{( T_i \leq t) }$ being the claim counting process. 
Let $m(\ud t,\ud z)$ denote the jump measure associated to $J$, given by:
\begin{equation*}\label{jump_measure}
m (\ud t,\ud z) = \sum_{i \geq 1}  \delta_{(T_i, Z_i)} (\ud t,\ud z)  \mathbbm{1}_{( T_i < +\infty) }, 
\end{equation*}
where $\delta_{(t,z)}$ denotes the Dirac measure in $(t,z)$. The marked point process $J$ can be written also as follows:
\begin{equation*}\label{jump_dynamics}
J_t = \int_{0}^{t} \int_{0}^{+\infty } z m(\ud s,\ud z), 
\qquad t\in[0,T].
\end{equation*}
We propose a modeling framework where the agent undertakes self-protection activities, which means that her/his actions reduce the intensity and the severity of the claims. 
Under a null effort the claim arrival process $N$ has $\mathbb F$-predictable stochastic intensity 
$\{\lambda^0_t,\ t \in [0,T]\}$ and the claim size distribution function is $F: [0,+ \infty) \to [0, 1]$. We assume that $\{\lambda^0_t,\ t \in [0,T]\}$ is a strictly positive $\mathbb F$-predictable process satisfying the following condition.
\begin{assumption}\label{ass:epsilon}
     There exists $\epsilon >0$ such that 
\begin{equation*}\label{eq:epsilon}
\mathbb{E}^{\P^0}\left[ \exp{\left((1+\epsilon)\int_0^T \lambda^0_t \ud t\right)}\right]<+\infty.   
\end{equation*}  
\end{assumption}
 Here, $\mathbb{E}^{\P^0}[\cdot]$ denotes the expectation under $\P^0$. Under $\P^0$,  the dual-predictable projection (or compensator measure)   of $m(\ud t,\ud z)$ is given by
\begin{equation*}\label{comp-zero}
\nu^0(\ud t,\ud z) = \lambda^0_{t} F(\ud z)\ud t, 
\end{equation*}
and we denote by $\widetilde m^0(\ud t,\ud z):= m(\ud t,\ud z) - \nu^0(\ud t,\ud z) $ the $\P^0$-compensated random measure.
Let $\mathbf u=\{(u_t^\a, u_t^\b),\ t \in [0,T]\}$ be a prevention strategy, where the first line of intervention aims at reducing  the claim arrival intensity. Precisely, for any $t \in [0,T]$
$$ \lambda^{\mathbf u}_t = \gamma^\a (u_t^\a) \lambda^0_t,
$$
where $\gamma^\a(u^\a)$ takes values in $(0,1]$, is non-increasing and continuous function on $u^\a\in [0, \zeta_1]$ and $\gamma^\a(0) = 1$.
According to classical literature on stochastic control (see, for instance \cite{ek81}) the choice of a dynamic stochastic control acts on the law of the loss process $J$ and we will introduce a new probability measure $\P^{\mathbf u}$ equivalent to $\P^0$  such that the jump measure $m(\ud t,\ud z)$ has controlled compensator measure
\begin{equation}\label{comp}
\nu^{\mathbf u}(\ud t,\ud z)= \gamma^\a(u_t^\a) \nu^0(\ud t,\ud z)= \gamma^\a(u_t^\a) \lambda^0_{t} F(\ud z)\ud t
\end{equation}
where $\gamma^\a(u^\a)$ takes values in $(0,1]$, is non-increasing and continuous function on $u^\a\in [0, \zeta_1]$ and $\gamma^\a(0) = 1$.

The probability measure $\P^{\mathbf u}$  will be defined through a change of probability measure on the space $( \Omega , \mathcal{F}, \P^0 ; \mathbb{F} )$. To this end, we introduce the Dolèans-Dade exponential  $L^{\mathbf u}=\{L_t^{\mathbf u},\ t \in [0,T]\}$ 
$$
L_t^{\mathbf u}:= \mathcal{E}\left (\int_{0}^{t}\int_0^{+\infty} (\gamma^\a(u_s^\a)- 1) \widetilde m^0(\ud s,\ud z) \right ),
$$ 
which is explicitly given by  
\begin{align}
    L_t^{\mathbf u}&=\exp { \left( - \int_{0}^{t}\int_0^{+\infty} (\gamma^\a(u_s^\a)- 1) \nu^0(\ud s,\ud z) + \int_{0}^{t} \int_0^{+\infty}\!\!\log{(\gamma^\a(u_s^\a))} m(\ud s,\ud z) \right) }\nonumber\\
    & =\exp { \left( - \int_{0}^{t} (\gamma^\a(u_s^\a)- 1) \lambda_s^0\ud s + \int_{0}^{t} \int_0^{+\infty}\!\log{(\gamma^\a(u_s^\a))} m(\ud s,\ud z) \right) }.\label{def:elle}
\end{align}

\begin{proposition}\label{Girsanov}
  Under Assumption \ref{ass:epsilon}, for any $\bF$-predictable process taking value in $U$, $\mathbf u=\{(u^\a_t, u^\b_t),\ t \in [0,T]\}$, the  Radon-Nikod\'ym  density process $L^{\mathbf u}$ given in \eqref{def:elle} is an $(\mathbb F,\P^0)$-martingale. 
\end{proposition}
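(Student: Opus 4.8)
The plan is to show that $L^{\mathbf u}$ is a strictly positive local martingale that is dominated, uniformly in time, by an integrable random variable, and then to invoke the classical fact that a nonnegative local martingale of class (D) is a genuine uniformly integrable martingale. First I would recognize $L^{\mathbf u}=\mathcal E(M^{\mathbf u})$ as the Dol\'eans--Dade exponential of the stochastic integral $M^{\mathbf u}_t:=\int_0^t\int_0^{+\infty}(\gamma^\a(u_s^\a)-1)\,\widetilde m^0(\ud s,\ud z)$ and argue that $M^{\mathbf u}$ is an $(\mathbb F,\P^0)$-local martingale. The integrand is $\mathbb F$-predictable and bounded, since $\gamma^\a(u_s^\a)\in(0,1]$ gives $\gamma^\a(u_s^\a)-1\in(-1,0]$; moreover Assumption \ref{ass:epsilon} forces $\int_0^T\lambda^0_s\,\ud s<+\infty$ $\P^0$-a.s., so that $\int_0^t\int_0^{+\infty}\lvert\gamma^\a(u_s^\a)-1\rvert\,\nu^0(\ud s,\ud z)=\int_0^t\lvert\gamma^\a(u_s^\a)-1\rvert\,\lambda^0_s\,\ud s<+\infty$ a.s. This integrability of the integrand against the compensator $\nu^0$ guarantees that the integral against $\widetilde m^0$ is a well-defined local martingale. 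Because the jumps of $M^{\mathbf u}$ equal $\gamma^\a(u_{T_i}^\a)-1\in(-1,0]$, the exponential $L^{\mathbf u}$ stays strictly positive, in agreement with the explicit formula \eqref{def:elle}; thus $L^{\mathbf u}$ is a nonnegative local martingale with $L_0^{\mathbf u}=1$.

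Next I would establish the crucial a priori upper bound directly from \eqref{def:elle}. Since $\gamma^\a(u_s^\a)\le1$, the drift term $-\int_0^t(\gamma^\a(u_s^\a)-1)\lambda^0_s\,\ud s=\int_0^t(1-\gamma^\a(u_s^\a))\lambda^0_s\,\ud s$ is nonnegative and, as $1-\gamma^\a(u_s^\a)<1$, is bounded above by $\int_0^T\lambda^0_s\,\ud s$; at the same time $\log\gamma^\a(u_s^\a)\le0$ makes the jump term $\int_0^t\int_0^{+\infty}\log(\gamma^\a(u_s^\a))\,m(\ud s,\ud z)$ nonpositive. Combining these two observations yields $0<L_t^{\mathbf u}\le\exp\!\big(\int_0^T\lambda^0_s\,\ud s\big)$ for every $t\in[0,T]$.

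Finally I would close the argument by domination. Setting $Y:=\exp\!\big(\int_0^T\lambda^0_s\,\ud s\big)$ and using $\lambda^0>0$, one has $Y\le\exp\!\big((1+\epsilon)\int_0^T\lambda^0_s\,\ud s\big)$, so Assumption \ref{ass:epsilon} gives $Y\in L^1(\P^0)$. Hence $\{L^{\mathbf u}_t\}_{t\in[0,T]}$ is a local martingale bounded in absolute value by the single integrable random variable $Y$, which makes the family $\{L^{\mathbf u}_\tau:\tau\ \text{stopping time}\}$ uniformly integrable, i.e.\ $L^{\mathbf u}$ is of class (D). A local martingale of class (D) is a true (uniformly integrable) martingale; concretely, taking a localizing sequence $\tau_n\uparrow\infty$, the identity $\mathbb E^{\P^0}[L^{\mathbf u}_{t\wedge\tau_n}\mid\mathcal F_s]=L^{\mathbf u}_{s\wedge\tau_n}$ passes to the limit by (conditional) dominated convergence, giving $\mathbb E^{\P^0}[L^{\mathbf u}_t\mid\mathcal F_s]=L^{\mathbf u}_s$ for $s\le t$ and thus the claimed $(\mathbb F,\P^0)$-martingale property.

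I expect the genuinely delicate points to be the two structural facts underpinning the reduction, namely the careful verification that $M^{\mathbf u}$ is a local martingale (hinging on the a.s.\ finiteness of $\int_0^T\lambda^0_s\,\ud s$ and the strict positivity of the exponential) and the derivation of the uniform bound $L_t^{\mathbf u}\le Y$, which is what lets Assumption \ref{ass:epsilon} do the work; by contrast, the passage from a dominated nonnegative local martingale to a true martingale is entirely standard. It is worth noting that the bound only requires integrability of $\exp\!\big(\int_0^T\lambda^0_s\,\ud s\big)$, so the extra margin provided by $\epsilon>0$ is not strictly needed here and is presumably reserved for the later BSDE estimates.
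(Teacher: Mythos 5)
Your proof is correct and takes essentially the same route as the paper: you establish that $L^{\mathbf u}$ is a positive local martingale as the Dol\'eans--Dade exponential, derive the same pathwise bound $0<L^{\mathbf u}_t\le \exp\big(\int_0^T\lambda^0_s\,\ud s\big)$ (the paper's \eqref{stima}), and use Assumption \ref{ass:epsilon} to upgrade to a true martingale via uniform integrability. The only (harmless) difference is the final step, where you invoke domination by the single integrable variable $\exp\big(\int_0^T\lambda^0_s\,\ud s\big)$ and the class (D) property, whereas the paper bounds $\mathbb{E}^{\P^0}\big[\big(\sup_{t\in[0,T]}L^{\mathbf u}_t\big)^{1+\epsilon}\big]$; your closing observation that the $\epsilon$-margin is not actually needed for this proposition is accurate.
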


\begin{proof} First we observe that $L^\mathbf u$ is a positive $(\mathbb F,\P^0)$-local martingale because it is the Dolèans-Dade exponential of an $(\mathbb F,\P^0)$-martingale. In fact, since for any $u^\a \in  [0, \zeta_1]$, $0 <\gamma^\a(u^\a)\leq 1$, 
$$
\mathbb{E}^{\P^0}\Big [\int_0^T\int_0^{+\infty} |\gamma^\a(u_s^\a)- 1| \nu^0(\ud s,\ud z) \Big ] \leq \mathbb{E}^{\P^0}\Big [\int_0^T \lambda^0_t \ud t \Big]$$
which is finite thanks to  Assumption \ref{ass:epsilon}.
Note that $0 <\gamma^\a(u^\a)\leq 1$ implies $\log{(\gamma^\a(u^\a))} <0 $ and so by \eqref{def:elle} 
\begin{equation}
\label{stima}
 L^{\mathbf u}_T \leq \exp \left ({  - \int_{0}^{T} (\gamma^\a(u_t^\a)- 1) \lambda_t^0\ud t} \right ) \leq \exp{\left(\int_0^T \lambda^0_t \ud t\right)}. 
 \end{equation}
Finally, by Assumption \ref{ass:epsilon}, the process
$L^{\mathbf u}$ turns to be a uniformly integrable $(\mathbb F,\P^0)$-supermartingale,
hence a true $(\mathbb F,\P^0)$-martingale. Indeed
\begin{align}
\mathbb{E}^{\P^0}\left[ \left(\sup_{t\in[0,T]} L^{\mathbf u}_t\right)^{1+\epsilon}\right] & \leq \mathbb{E}^{\P^0}\left[\exp{\left((1+\epsilon)\int_0^T \lambda^0_t \ud t\right)}\right]<+\infty. 
\end{align}
\end{proof}

\noindent Thanks to Proposition \ref{Girsanov}, we define the probability measure $\P^{\mathbf u}$ on $(\Omega, \mathcal{F} _T)$ as
\begin{equation*}
    \frac{\ud \P^{\mathbf u} }{\ud \P ^0}\Big{|}_{\F_T} = L_T^{\mathbf u}.
\end{equation*} 
By the Girsanov Theorem we have that under $\P^{\mathbf u}$ the integer-valued random measure $m(\ud t, \ud z)$ has dual predictable projection given by \eqref{comp} and we denote by \begin{equation*}
\widetilde m^{\mathbf u}(\ud t, \ud z) := m(\ud t, \ud z) - \nu^{\mathbf u}(\ud t,\ud z) = m(\ud t, \ud z)- \gamma^\a(u_t^\a) \lambda^0_{t} F(\ud z)\ud t\end{equation*} 
the compensated random measure. 

Let $\mathbf u=\{(u_t^\a, u_t^\b),\ t \in [0,T]\}$ be a prevention strategy, the second line of interventions $u^\b$ aims at reducing claim sizes. Precisely, under the action of the effort $\mathbf u=\{(u^\a_t, u^\b_t),\ t \in [0,T]\}$, the cumulative losses covered by the agent until time $t$ are
\begin{equation*}
    J^{\mathbf u}_t = \sum_{i=1}^{N_t} \gamma^\b(u^\b_{T_i}) Z_i = \int_{0}^{t} \int_{0}^{+\infty }  \gamma^\b(u^\b_s) z m(\ud s,\ud z), 
\quad t\in[0,T]
\end{equation*}
 where $\gamma^\b(u^\b)$ takes values in $[0,1]$, is non-increasing and continuous function on $u^\b\in [0,\zeta_2]$ and $\gamma^\b(0) = 1$ and,  we recall, under $\P^{\mathbf u}$, $m(\ud s,\ud z)$ has controlled predictable compensator given by \eqref{comp}.

The agent can invest in a bank account with constant interest rate $r \in \R^+$, her/his wealth  
associated to the pair $\mathbf{u}= (u^\a,u^\b)\in \mathcal{U} $ (here, $\mathcal{U}$ denotes the class of admissible efforts which will be defined below in Definition \ref{def:adm_UU}) satisfies
\begin{equation}\label{Wealth_Dynamics_new}
\ud X^{\mathbf u}_t = r X^{\mathbf u}_t \ud t - (c_1 (u_t^\a) + c_2 (u_t^\b) )  \ud t - \int_{0}^{+\infty } \gamma^\b(u_t^\b) z m(\ud t,\ud z), 
\end{equation}
with initial condition $X^{\mathbf u}_0=x_0 \in \R^+$.
The explicit solution to equation \eqref{Wealth_Dynamics_new} is given by
\begin{align} 
X^{\mathbf u}_t = & x_0  e^{rt} - \!\int_0^t e^{r(t-s)}(c_1(u_s^\a) + c_2(u_s^\b) )\ud s + \nonumber\\
& -   \int_0^t\int_{0}^{+\infty } \!\!\!\gamma^\b(u_s^\b) e^{r(t-s)} z m(\ud s,\ud z) \label{Wealth_Dynamics_sol}
\end{align}
for each $t \in [0,T]$, and the discounted wealth $\bar{X}^{\mathbf u}=\{\bar{X}^{\mathbf u}_t,\ t \in [0,T]\}$ with $\bar{X}^{\mathbf u}_t := e^{-rt} X^{\mathbf u}_t$ is given by
\begin{equation*}\label{discount_W}
\bar{X}^{\mathbf u}_t = x_0 - \int_0^t e^{-rs} ( c_1(u_s^\a) + c_2(u_s^\b)) \ud s - \int_0^t\int_{0}^{+\infty } \gamma^\b(u_s^\b) e^{-rs} z m(\ud s,\ud z).
\end{equation*}
In order to formulate our optimization problem we introduce the exponential utility function:
\begin{equation*}\label{exp_utility}
U(x) = 1 - \exp{(-\eta x)},\quad x\in \mathbb{R},
\end{equation*}
where $\eta \in \R^+$ is the constant relative risk aversion coefficient.
The agent aims at maximizing its expected utility at terminal time $T>0$, which is equivalent to
\begin{equation}\label{optimal_problem}
v:=\inf_{{\mathbf u}\in \mathcal{U}}   
\mathbb{E}^{\P^{\mathbf u} } \left[ e^{- \eta X^{\mathbf u}_T} \right].
\end{equation}

\begin{definition}
\label{def:adm_UU}
We define by $\mathcal{U}$ the class of {\em admissible prevention efforts}, which are all the $U=[0, \zeta_1]\times[0,\zeta_2]$-valued and $\mathbb{F}$-predictable processes ${\mathbf u}=\{(u^\a_t, u^\b_t),\ t \in [0,T]\}$,  such that 
\begin{equation*}
{\mathbb{E}^{\P^{\mathbf u}}\left[e ^{-\eta X^{\mathbf u}_T}\right]} < +\infty.
\end{equation*}
Given $t \in [0,T]$, we will denote by $ \mathcal{U}_t$ the class $\mathcal{U}$ restricted to the time interval $[t,T]$.
\end{definition}

\begin{proposition}\label{Adm}
Under the following integrability conditions
\begin{equation}\label{Ass2}
\mathbb{E}^{\P^0}\left[ e^{2\int_0^T \lambda^0_t \ud t}\right]<+\infty, \quad \mathbb{E}^{\P^0}\left[ e^{2 \eta e^{rT}J_T}\right]<+\infty,
\end{equation}
any $U$-valued, $\mathbb{F}$-predictable process is an admissible prevention effort.
\end{proposition}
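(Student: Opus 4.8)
The plan is to exploit that admissibility only requires an upper bound on $\mathbb{E}^{\P^{\mathbf u}}[e^{-\eta X^{\mathbf u}_T}]$, and to reduce this quantity, after changing measure back to $\P^0$, to the two integrability conditions in \eqref{Ass2}. I would first record that the first condition in \eqref{Ass2} is precisely Assumption \ref{ass:epsilon} with $\epsilon=1$, so that Proposition \ref{Girsanov} applies and $\P^{\mathbf u}$ is a well-defined probability measure for every $U$-valued, $\mathbb{F}$-predictable $\mathbf u$. The guiding observation is then that the only genuinely unbounded ingredient in $-\eta X^{\mathbf u}_T$ is the accumulated loss, whereas the cost terms and the control functions $\gamma^\a,\gamma^\b$ are bounded by construction.

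First I would bound the integrand pathwise. Starting from the explicit solution \eqref{Wealth_Dynamics_sol}, I note that $c_1,c_2$ are continuous and increasing on the compact sets $[0,\zeta_1]$ and $[0,\zeta_2]$ with $c_1(0)=c_2(0)=0$, hence $0\le c_1(u^\a_s)\le c_1(\zeta_1)$ and $0\le c_2(u^\b_s)\le c_2(\zeta_2)$; consequently the cost contribution $\eta\int_0^T e^{r(T-s)}(c_1(u^\a_s)+c_2(u^\b_s))\,\ud s$ is dominated by the deterministic constant $K:=\eta\,(c_1(\zeta_1)+c_2(\zeta_2))\,T\,e^{rT}$. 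Since moreover $\gamma^\b(u^\b_s)\in[0,1]$ and $e^{r(T-s)}\le e^{rT}$, the covered-loss integral satisfies $\int_0^T\int_0^{+\infty}\gamma^\b(u^\b_s)e^{r(T-s)}z\,m(\ud s,\ud z)\le e^{rT}J_T$. Collecting these estimates yields the bound, uniform in $\mathbf u$,
\[
e^{-\eta X^{\mathbf u}_T}\le e^{-\eta x_0 e^{rT}+K}\,e^{\eta e^{rT} J_T}.
\]

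Next I would switch measures. By the definition of $\P^{\mathbf u}$ through the density $L^{\mathbf u}_T$ and the pathwise estimate \eqref{stima}, namely $L^{\mathbf u}_T\le \exp\bigl(\int_0^T\lambda^0_t\,\ud t\bigr)$, one obtains
\[
\mathbb{E}^{\P^{\mathbf u}}\!\left[e^{-\eta X^{\mathbf u}_T}\right]
=\mathbb{E}^{\P^0}\!\left[L^{\mathbf u}_T\,e^{-\eta X^{\mathbf u}_T}\right]
\le e^{-\eta x_0 e^{rT}+K}\,\mathbb{E}^{\P^0}\!\left[e^{\int_0^T\lambda^0_t\,\ud t}\,e^{\eta e^{rT}J_T}\right].
\]
A single application of the Cauchy--Schwarz inequality then separates the two exponential factors,
\[
\mathbb{E}^{\P^0}\!\left[e^{\int_0^T\lambda^0_t\,\ud t}e^{\eta e^{rT}J_T}\right]
\le \left(\mathbb{E}^{\P^0}\!\left[e^{2\int_0^T\lambda^0_t\,\ud t}\right]\right)^{\!1/2}
\left(\mathbb{E}^{\P^0}\!\left[e^{2\eta e^{rT}J_T}\right]\right)^{\!1/2},
\]
and both factors are finite precisely by the two conditions in \eqref{Ass2}, which completes the argument.

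The computation is essentially routine; the only point requiring care --- and the reason the conditions in \eqref{Ass2} appear in this split form --- is the passage from $\P^{\mathbf u}$ to $\P^0$. The deterministic bound \eqref{stima} on the Radon-Nikod\'ym density is what renders the final estimate uniform over all efforts $\mathbf u$, so that no control-dependent quantity survives and finiteness reduces to hypotheses formulated solely under the reference measure $\P^0$.
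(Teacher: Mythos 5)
Your proof is correct and follows essentially the same route as the paper's: bound the costs and $\gamma^\b$ pathwise, pass to $\P^0$ via the density, invoke the deterministic estimate \eqref{stima} on $L^{\mathbf u}_T$, and split the resulting expectation into the two terms of \eqref{Ass2}. The only (immaterial) difference is that you split with the Cauchy--Schwarz inequality where the paper uses $ab\le\tfrac12(a^2+b^2)$, and your preliminary remark that the first condition in \eqref{Ass2} subsumes Assumption \ref{ass:epsilon} (with $\epsilon=1$), so that $\P^{\mathbf u}$ is well defined, is a correct and welcome extra check.
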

\begin{proof}
Let $\mathbf{u}=\{(u^\a_t, u^\b_t),\ t \in [0,T]\}$ be a $U$-valued, $\mathbb{F}$-predictable process.
In view of \eqref{def:elle} and \eqref{stima}, since $c^\a(u^\a)$ and $c^\b(u^\b)$ are bounded functions, $\gamma^\b(u^\b)\leq 1$ and using the inequality $ab \leq \frac{1}{2}(a^2+b^2)$, for every $a,b \in \R$, we have that  
\begin{align}
& {\mathbb{E}^{\P^{\mathbf u}}\left[e ^{-\eta X^{\mathbf u}_T}\right]}  = \mathbb{E}^{\P^0}\left[ L_T^\mathbf u e ^{-\eta X^\mathbf u_T}\right]\nonumber\\
& \leq \mathbb{E}^{\P^0}\!\!\left[e^{\int_0^T \lambda^0_t \ud t} e ^{-\eta \left(x_0e^{rT} - \int_0^T e^{r(T-s)}(c_1(u^\a_s ) + c_2(u^\b_s) )\ud s - \int_0^T\int_{0}^{+\infty }\! \gamma^\b(u_s^\b) e^{r(T-s)} z m(\ud s,\ud z)\right)}\right]\nonumber\\
& \leq C \mathbb{E}^{\P^0}\!\!\left[e^{\int_0^T \lambda^0_t \ud t} e^{\eta e^{rT}\int_0^T\int_{0}^{+\infty }z m(\ud s,\ud z)}\right] \nonumber \\
& \leq \frac{C}{2}\left(\mathbb{E}^{\P^0}\!\!\left[e^{2\int_0^T \lambda^0_t \ud t}\right]+\mathbb{E}^{\P^0}\!\!\left[e^{2 \eta e^{rT}J_T}\right]\right) \label{stima_sum}
\end{align}
for some suitable constant $C>0$, and \eqref{stima_sum} is finite in view of \eqref{Ass2}.
\end{proof}

\section{The optimization problem with a BSDE approach} \label{sec:BSDE}

In this section we discuss the stochastic control problem \eqref{optimal_problem} by means of BSDEs.
The BSDE approach transcends the classical method  based on the Hamilton–Jacobi–Bellman equation, requiring neither the Markov property nor, in Markovian cases, any regularity of the value function, and thus applies equally to non-Markovian settings, partial-information models in the presence of infinite-dimensional filter processes 
(see, e.g., \cite{elkaroui-peng-quenez1997, delong2013, ceci-gerardi2011, lim-quenez2015, ceci-colaneri-cretarola2020} and references therein).
Moreover, in contrast to much of the existing literature in the insurance framework (see, e.g. \cite{bhk2023, BCCS_2024, BCCS_2025, BC_2020, ceci-cretarola2025, delong2013}), our approach does not require specifying the underlying filtration $\mathbb{F}$ and can be applied to a broad class of risk models (see Subsection \ref{Exa}).
Taking \eqref{Wealth_Dynamics_sol} into account, problem  \eqref{optimal_problem} can be written as
\begin{align} 
v & =  e^{ - \eta  x_0 e^{rT}} \!\!\!\inf_{\mathbf u \in \mathcal{U}}\mathbb{E}^{\P^{\mathbf u} } \left[ e^{\eta (\int_{0}^{T} \int_{0}^{+\infty } \gamma^\b(u_s^\b)e^{r(T-s)}  z m(\ud s,\ud x) +\int_{0}^{T} e^{r(T-s)} (c_1(u_s^\a) + c_2(u_s^\b) ) \ud s)} \right] \nonumber \\
&= e^{ - \eta  x_0 e^{rT}} \inf_{\mathbf u \in \mathcal{U}}\mathbb{E}^{\P^{\mathbf u} } \left[ e^{- \eta Y_T^{\mathbf u}} \right],\label{(P)}
\end{align}
where $\{Y_t^{\mathbf u},\ t \in [0,T]\}$ denotes the auxiliary wealth 
\begin{equation}\label{wealth_Y}
Y^{\mathbf u}_t = -\! \int_0^t \!\!\int_{0}^{+\infty }\gamma^\b(u_s^\b) e^{r(t-s)} z m(\ud s,\ud z) - \int_0^t e^{r(t-s)} (c_1(u_s^\a) + c_2(u_s^\b) ) \ud s.
\end{equation}

We give a dynamic formulation of the stochastic control problem in \eqref{(P)}, and we introduce, for any $\mathbf u\in\mathcal{U}$, the associated Snell-Envelope:
\begin{equation}
\label{eqn:W}
W^{\mathbf u}_t = \essinf_{\bar{u}\in\mathcal{U}(t,\mathbf u)}
{\mathbb{E}^{\P^{\bar{u}}}\biggl[e ^{-\eta Y^{\mathbf u}_T } \Big{|}\mathcal{F}_t\biggr]},\; \forall t \in [0,T],
\end{equation}
with $\mathcal{U}(t, {\mathbf u})$ defined, for an arbitrary effort  ${\mathbf u}\in\mathcal{U}$, as the restricted class of controls almost surely equal to $\mathbf u$ over $[0,t]$, i.e.
\begin{equation*}
\mathcal U(t, {\mathbf u}):=\Big\{ \bar {\mathbf u} \in \mathcal U: \bar {\mathbf u}_s = \mathbf u_s \ \text{a.s.} \ \text{for all} \ s\le t \Big\}.
\end{equation*}

\begin{remark}
    The Snell-Envelope is well defined in terms of $\P^0$-infimum because all the probability measures $\P^{\mathbf u}$ are equivalent to $\P^0$; moreover for $\bar {\mathbf u}, \bar {\mathbf v} \in \mathcal U(t,\mathbf u)$ then  $\P^{\bar {\mathbf u}}(A) = \P^{\bar{\mathbf v}}(A) $  for any  $A \in \mathcal{F}_t$.  See \cite{ek81} for further details.
\end{remark}

Let  $V=\{V_t,\ t \in [0,T]\}$ be the value process defined as follows
\begin{equation}
\label{eqn:V}
V_t = \essinf_{\bar{\mathbf u}\in\mathcal{U}_t}{\mathbb{E}^{\P^{ \bar {\mathbf u} }}\biggl[e^{-\eta e^{rT}(\overline{Y}^{\bar{\mathbf u}}_T-\overline{Y}^{\bar{\mathbf u}}_t)}\Big{|}  \mathcal{F}_t\biggr]},\ \forall t \in [0,T],
\end{equation}
where $\overline{Y}^{\mathbf u}_t = e^{-rt} Y^{\mathbf u}_t$, for any $t\in[0,T]$, with $Y^{\mathbf u}$ given in \eqref{wealth_Y}. 

\begin{remark}\label{remark:Kall}
    For any $\bar{\mathbf u} \in \mathcal U(t,\mathbf u)$ we have that
    $$
    {\mathbb{E}^{\P^{\bar {\mathbf u} }}
    \biggl[e^{-\eta e^{rT}(\overline{Y}^{\bar{ \mathbf u}}_T-\overline{Y}^{\bar{\mathbf u}}_t)} \Big{|} \mathcal{F}_t\biggr]}
    $$
    does not depend on $\mathbf u$. 
    Indeed, by the Kallianpur-Striebel formula we get that
    \begin{align*}
    &{\mathbb{E}^{\P^{\bar {\mathbf u} }}
    \biggl[e^{-\eta e^{rT}(\overline{Y}^{\bar{ \mathbf u}}_T-\overline{Y}^{\bar{\mathbf u}}_t)} \Big{|} \mathcal{F}_t\biggr]} \\
    & \qquad = \mathbb{E}^{\P^0 }\left[ \frac{L^{\bar {\mathbf u}}_T}{L^{\bar {\mathbf u}}_t} e^{\eta e^{rT}\left(  \int_t^T e^{-rs}\left(c_1(\bar u^\a_s) + c_2(\bar u^\b_s)\right) \,\ud s
+\int_t^T\int_0^{+\infty} e^{-rs} \bar u^\b_s z \,m(\ud s,\ud z)\right) } \Big{|}  \mathcal{F}_t \right],
    \end{align*}
    under $\P^0$, $m(\ud t, \ud z)$ has non-controlled compensator $\lambda^0_t F(\ud z) \ud t$, and   
    $$
    \frac{L^{\bar {\mathbf u}}_T}{L^{\bar {\mathbf u}}_t} = \exp { \left( - \int_{t}^{T} (\gamma^\a(\bar u_s^\a)- 1) \lambda_s^0\ud s + \int_{t}^{T} \int_0^{+\infty}\log{\gamma^\a(\bar u_s^\a)} m(\ud s,\ud z) \right) }
    $$
    only depends on $\bar u_s$, for $t \le s \le T$.
    \end{remark}
Thanks to Remark \ref{remark:Kall}, we have that for every $\mathbf u \in \mathcal{U}$ the essential infimum over $\mathcal U_t$ in \eqref{eqn:V} coincides with that under $\mathcal U(t, {\mathbf u})$ and so 
\begin{equation}\label{eqn:W^{u,e}_and_V}
W^{\mathbf u}_t = e^{-\eta \overline{Y}^{\mathbf u}_t e^{rT}} V_t 
\end{equation}
and, in turn, choosing the null effort $\mathbf u =(u^\a_t , u^\b_t)=(0,0)$, we find:
\begin{equation}
\label{eqn:VisWI}
V_t = e^{\eta \overline{Y}^{\mathbf 0}_t e^{rT}} W^{\mathbf 0}_t,\quad  \forall t \in [0,T],
\end{equation}
where $W^{\mathbf 0}=\{ W_t^{\mathbf 0},\; t \in [0,T]\}$ is the Snell Envelope associated to null effort, that is
\begin{equation*}
\label{eqn:W0}
W^{\mathbf 0}_t = \essinf_{\bar{\mathbf u}\in\mathcal{U}(t,\mathbf 0)}
{\mathbb{E}^{\P^{\bar{\mathbf u}} }\biggl[e ^{-\eta Y^{\bar{\mathbf u}}_T } \Big{|}\mathcal{F}_t\biggr]}, \quad \forall t \in [0,T].
\end{equation*}
Our aim now is working on a BSDE characterization of $W^{\mathbf 0}$, which also gives a full description of the value process $V$ and of an optimal effort $\mathbf u^*=\{ (u^{\a,*}_t, u^{\b,*}_t), \ t \in [0,T]\}$.
\begin{definition}
We define the following classes of stochastic processes:
\begin{itemize}
\item $\mathcal{L}^\b$ denotes the space of c\`adl\`ag $\mathbb{F}$-adapted processes $R=\{R_t, \ t \in [0,T]\}$ such that:
\[
\mathbb{E}^{\P^0}\left[\int_0^T |R_t|^2 \ud t\right] <+\infty.
\]
\item $\widehat{\mathcal{L}}$ denotes the space of $[0,+\infty)$-indexed $\mathbb{F}$-predictable random fields $\Theta=\{ \Theta(t,z),\ t \in [0,T],\ z \in [0, + \infty)\}$ such that:
\begin{equation}
\mathbb{E}^{\P^0}\left[\int_0^T\int_0^{+\infty} |\Theta(t,z)|^2 \lambda_t^0 F(\ud z) \ud t \right]  <+\infty.
\end{equation}

\item ${\mathcal{L}}^{\perp}$ denotes the space of $(\bF, \P^0)$-square integrable martingales $M=\{M_t,\ t \in [0,T]\}$ orthogonal to the compensated random measure $\widetilde m^0(\ud t, \ud z)$.  This means that for any $\Theta \in \widehat{\mathcal{L}}$, $M$ is orthogonal to the martingale $\{ \int_0^t\int_0^{+\infty} \Theta(t,z) \widetilde m^0(\ud s,\ud z), t \in [0,T]\}.$

\end{itemize}
\end{definition}

Now, we provide a general verification result, which will be useful in the sequel.

\begin{proposition}\label{VT1}
Suppose there exists an $\mathbb{F}$-adapted process $D=\{D_t,\ t \in [0,T]\}$ such that:
\begin{itemize}
\item[(i)] $\{ D_t e^{ - \eta \overline{Y}^{\mathbf u}_t e^{rT}},\ t \in [0, T]\}$ is an $(\mathbb{F},\P^{\mathbf u})$-submartingale for any $\mathbf u \in \mathcal{U}$ and an $(\mathbb{F},\P^{\mathbf u^*})$-martingale for some $\mathbf u^* \in \mathcal{U}$;
\item[(ii)] $D_T = 1$ $\P^0-a.s.$
\end{itemize}
Then, $D_t = V_t$ $\P^0$-a.s. for each $t \in [0,T]$ and $\mathbf u^* $ is an optimal effort. 
Moreover, $W^{\mathbf u}_t = D_t e^{ - \eta \overline{Y}^{\mathbf u}_t e^{rT}}$ is an $(\mathbb{F},\P^{\mathbf u})$-submartingale for any $\mathbf u \in \mathcal{U}$ and $W^{\mathbf u^*}_t$ is an $(\mathbb{F},\P^{\mathbf u^*})$-martingale.
\end{proposition}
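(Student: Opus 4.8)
The plan is to read Proposition \ref{VT1} as a Bellman-type verification result and to obtain the two inequalities $D_t\le V_t$ and $D_t\ge V_t$ from the two halves of hypothesis (i), using throughout the identity $W^{\mathbf u}_t = e^{-\eta \overline{Y}^{\mathbf u}_t e^{rT}}V_t$ proved just before the statement, together with the Snell-envelope definition \eqref{eqn:W} and the terminal normalization (ii). I would first set $\Gamma^{\mathbf u}_t := D_t\, e^{-\eta \overline{Y}^{\mathbf u}_t e^{rT}}$ and record that, since $e^{rT}\overline{Y}^{\mathbf u}_T = Y^{\mathbf u}_T$ and $D_T=1$, the terminal value is $\Gamma^{\mathbf u}_T = e^{-\eta Y^{\mathbf u}_T}$ for every $\mathbf u\in\mathcal U$.

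First I would prove $D_t\le V_t$. Fix $t$ and an arbitrary $\mathbf u\in\mathcal U$, and take any $\bar{\mathbf u}\in\mathcal U(t,\mathbf u)$. Because $\bar{\mathbf u}$ coincides with $\mathbf u$ on $[0,t]$ and $\overline{Y}^{\bar{\mathbf u}}_t$ depends only on the control restricted to $[0,t]$, one has $\Gamma^{\bar{\mathbf u}}_t = \Gamma^{\mathbf u}_t = D_t e^{-\eta \overline{Y}^{\mathbf u}_t e^{rT}}$. Applying the $(\mathbb F,\P^{\bar{\mathbf u}})$-submartingale property from (i) (with $\mathbf u$ there replaced by $\bar{\mathbf u}\in\mathcal U$) and the terminal identity gives $\Gamma^{\mathbf u}_t\le \mathbb E^{\P^{\bar{\mathbf u}}}\!\left[e^{-\eta Y^{\bar{\mathbf u}}_T}\mid\mathcal F_t\right]$. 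Passing to the essential infimum over $\bar{\mathbf u}\in\mathcal U(t,\mathbf u)$ and invoking the identity $W^{\mathbf u}_t = e^{-\eta \overline{Y}^{\mathbf u}_t e^{rT}}V_t$ yields $D_t e^{-\eta \overline{Y}^{\mathbf u}_t e^{rT}}\le e^{-\eta \overline{Y}^{\mathbf u}_t e^{rT}}V_t$; cancelling the strictly positive factor leaves $D_t\le V_t$.

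For the reverse bound I would use the martingale half of (i). The $(\mathbb F,\P^{\mathbf u^*})$-martingale property of $\Gamma^{\mathbf u^*}$ together with $\Gamma^{\mathbf u^*}_T = e^{-\eta Y^{\mathbf u^*}_T}$ gives $\Gamma^{\mathbf u^*}_t = \mathbb E^{\P^{\mathbf u^*}}\!\left[e^{-\eta Y^{\mathbf u^*}_T}\mid\mathcal F_t\right]$. Since $\mathbf u^*\in\mathcal U(t,\mathbf u^*)$, the right-hand side is one of the competitors in the essential infimum defining $W^{\mathbf u^*}_t$, so $\Gamma^{\mathbf u^*}_t\ge W^{\mathbf u^*}_t = e^{-\eta \overline{Y}^{\mathbf u^*}_t e^{rT}}V_t$; cancelling again gives $D_t\ge V_t$, whence $D_t = V_t$ $\P^0$-a.s. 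Optimality of $\mathbf u^*$ then follows by evaluating the martingale identity at $t=0$, where $\overline{Y}^{\mathbf u^*}_0=0$, to get $\mathbb E^{\P^{\mathbf u^*}}\!\left[e^{-\eta Y^{\mathbf u^*}_T}\mid\mathcal F_0\right]=D_0=V_0$; taking $\P^0$-expectations and comparing with the bound $V_0\le \mathbb E^{\P^{\bar{\mathbf u}}}\!\left[e^{-\eta Y^{\bar{\mathbf u}}_T}\mid\mathcal F_0\right]$ valid for every competitor shows $\mathbf u^*$ attains the infimum in \eqref{(P)}. The \emph{Moreover} assertion is then immediate: with $D=V$, the identity $W^{\mathbf u}_t = e^{-\eta \overline{Y}^{\mathbf u}_t e^{rT}}V_t$ becomes $W^{\mathbf u}_t = \Gamma^{\mathbf u}_t$, so the submartingale and martingale claims for $W^{\mathbf u}$ and $W^{\mathbf u^*}$ are exactly hypothesis (i).

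The conditional-expectation manipulations above are routine; the step that needs genuine care is the passage to the essential infimum in the proof of $D_t\le V_t$. One must verify that the family $\{\mathbb E^{\P^{\bar{\mathbf u}}}[\,\cdot\mid\mathcal F_t]\}_{\bar{\mathbf u}\in\mathcal U(t,\mathbf u)}$ is downward directed, so that the pointwise a.s. bound $\Gamma^{\mathbf u}_t\le \mathbb E^{\P^{\bar{\mathbf u}}}[\,\cdot\mid\mathcal F_t]$, valid for each fixed $\bar{\mathbf u}$, transfers to the essential infimum; this is where stability of $\mathcal U(t,\mathbf u)$ under pasting of admissible controls and the equivalence of all $\P^{\mathbf u}$ to $\P^0$ (noted after \eqref{eqn:W}) enter. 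I also expect to rely on the admissibility built into Definition \ref{def:adm_UU} to ensure every conditional expectation appearing is finite, so that the (sub)martingale manipulations are legitimate.
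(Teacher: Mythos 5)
Your proof is correct and follows essentially the same route as the paper's: the submartingale half of (i) together with the terminal normalization $D_T=1$ yields $D_t\le V_t$, the martingale half at $\mathbf u^*$ yields $D_t\ge V_t$, and your detour through $\Gamma^{\mathbf u}$, $\mathcal U(t,\mathbf u)$ and the identity \eqref{eqn:W^{u,e}_and_V} is exactly the reduction the paper itself sets up before the statement (the paper simply phrases the two inequalities directly against $V_t$ from \eqref{eqn:V}). One small correction to your closing paragraph: the downward-directedness worry is unfounded, since a random variable that is $\P^0$-a.s.\ dominated by every member of a family is a.s.\ dominated by its essential infimum by the very definition of $\essinf$ (each comparison on its own null set is fine); directedness would be needed only to interchange $\essinf$ with a (conditional) expectation, which your argument never does.
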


\begin{proof} 
In view of the terminal condition and the submartingale property, for every $\mathbf u \in \mathcal{U}$ and $t \in [0,T]$ we have
\begin{equation*}
   \mathbb{E}^{\P^{\mathbf u} }\biggl[ D_T e^{ - \eta \overline{Y}^{\mathbf u}_T e^{rT}} \mid \mathcal{F}_t \biggr ] \ge D_t e^{ - \eta \overline{Y}^{\mathbf u}_t  e^{rT}}, \quad \P^0-{\rm a.s.}
\end{equation*}
so that
\begin{equation*}
    D_t \leq  \mathbb{E}^{\P^{\mathbf u} }\biggl[ e^{ - \eta (\overline{Y}^{\mathbf u}_T -  \overline{Y}^{\mathbf u}_t) e^{rT}}\mid \mathcal{F}_t \biggr ], \quad \P^0-{\rm a.s.}
\end{equation*}
which implies $D_t \leq V_t$ $\P^0$-a.s. for each $t \in [0,T]$. On the other hand, for $\mathbf u^* \in \mathcal{U}$, we have that
\begin{equation*}
D_t=\mathbb{E}^{\P^{\mathbf u^*} }\biggl[ e^{ - \eta (\overline{Y}^{\mathbf u^*}_T -  \overline{Y}^{\mathbf u^*}_t) e^{rT}}  \mid \mathcal{F}_t \biggr ]  \ge V_t, \quad \P^0-{\rm a.s.},
\end{equation*}
implying that $D_t=V_t$ $\P^0$-a.s. for each $t \in [0,T]$. From \eqref{eqn:W^{u,e}_and_V} the last part of the statement follows immediately.
\end{proof}

Notice that the last part of the statement of Proposition \ref{VT1} corresponds to the well-known Bellman optimality principle (see e.g. \cite{ek81,lim-quenez_2011}). \\
We now provide a verification result in terms of a suitable BSDE. In \cite{BCCS_2025}, a similar methodology was employed to address the self-protection problem within a contagion-risk model.

\begin{theorem}[Verification Theorem]\label{VerThm}
Assume that the second condition in \eqref{Ass2} holds and
\begin{equation}\label{lambda_cond}
    \mathbb E^{\P^0} \left[ e^{4 \int_0^T \lambda_s^0 ds} \right] < +\infty, \quad \mathbb E^{\P^0} \left[ \int_0^T    (\lambda_t^0)^2 \ud t \right]< + \infty.
\end{equation} 
Let $(R,\Theta^{R}, M) \in \mathcal{L}^\b \times \widehat{\mathcal{L}} \times \mathcal{L}^\perp$ be a solution to the BSDE under $\P^0$ 
\begin{equation} \label{bsde}
\begin{split}
R_t & = \Xi - \int_t^T \int_0^{+ \infty} \Theta^{R} (s, z) \widetilde m^{0}(\ud s, \ud z) +\\
    & \qquad - \int_t^T \esssup_{\mathbf u \in \mathcal U} f ( s, R_{s^-} , \Theta^{R}(s,\cdot),u_s^\a,u_s^\b)\,\ud s + M_T-M_t,
 \end{split}
 \end{equation}
with terminal condition 
$$\Xi = e^{- \eta Y^{\mathbf 0}_T} = e^{\eta e^{rT} \int_0^T\int_0^{+\infty} e^{-rt} z m(\ud t, \ud z)},
$$ 
where
\begin{equation} 
\begin{split}
&f ( t, R_{t^-} , \Theta^{R}(t, \cdot ) , u_t^\a, u_t^\b) )=
- R_{t^-} \eta e^{r(T-t)} (c_1(u_t^\a) + c_2(u_t^\b)) + \nonumber \\ 
&+ \int_0^{+\infty} \Theta^R(t,z) \lambda^0_{t^-} F(\ud z) + \nonumber \\
& -  \gamma^\a(u_t^\a) \lambda^0_{t^-} \int_0^{+\infty} \!\![(\Theta^R(t,z) + R_{t^-}) ( e^{-\eta e^{r(T-t)}z (1-\gamma^\b(u_t^\b))} -1) + \Theta^R(t,z)] F(\ud z) \label{ftilde}
\end{split}
    \end{equation}

\noindent Then, $R_t = W^{\mathbf 0}_t$ $\P^0$-a.s. for each $t\in [0,T]$ and any process $\mathbf u^*=(u^{\a,*},u^{\b,*}) \in \mathcal{U}$ which satisfies for any  $t\in[0,T]$:
\begin{equation*}\label{eqn:u*esssup}
f( t, R_{t^-} , \Theta^{R}(t, \cdot ) , u^{\a,*}_t,u^{\b,*}_t) 
	= \esssup_{\mathbf u \in \mathcal{U}}  f( t, R_{t^-} , \Theta^{R}(t, \cdot ) , u_t^\a,u_t^\b),\ \P^0-{\rm a.s.}
\end{equation*}
provides an optimal effort.
\end{theorem}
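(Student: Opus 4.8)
The plan is to reduce the statement to the abstract verification result of Proposition~\ref{VT1} by producing, out of the BSDE solution, a process $D$ meeting its two hypotheses. Guided by the identities \eqref{eqn:W^{u,e}_and_V}--\eqref{eqn:VisWI}, I would set
\[
D_t := R_t\, e^{\eta \overline{Y}^{\mathbf 0}_t e^{rT}}, \qquad t\in[0,T],
\]
which is $\mathbb F$-adapted since both $R$ and $\overline{Y}^{\mathbf 0}$ are. Condition (ii) is immediate: as $R_T=\Xi=e^{-\eta Y^{\mathbf 0}_T}$ and $Y^{\mathbf 0}_T=e^{rT}\overline{Y}^{\mathbf 0}_T$, one gets $D_T=e^{-\eta e^{rT}\overline{Y}^{\mathbf 0}_T}e^{\eta e^{rT}\overline{Y}^{\mathbf 0}_T}=1$, $\P^0$-a.s. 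Moreover, once $D=V$ is established, relation \eqref{eqn:W^{u,e}_and_V} taken at $\mathbf u=\mathbf 0$ gives at once $W^{\mathbf 0}_t=D_t e^{-\eta\overline{Y}^{\mathbf 0}_t e^{rT}}=R_t$, which is the first assertion, while the optimality of $\mathbf u^*$ will come from the martingale part of Proposition~\ref{VT1}. Hence everything hinges on condition (i).

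For condition (i) the core computation is the $\P^{\mathbf u}$-drift of
\[
\Phi^{\mathbf u}_t := D_t\, e^{-\eta \overline{Y}^{\mathbf u}_t e^{rT}} = R_t\, e^{\eta(\overline{Y}^{\mathbf 0}_t-\overline{Y}^{\mathbf u}_t)e^{rT}}.
\]
I would apply the product rule to the semimartingale $R$, whose dynamics are read off \eqref{bsde} as $\ud R_t = \esssup_{\mathbf u}f\,\ud t + \int_0^{+\infty}\Theta^R(t,z)\,\widetilde m^0(\ud t,\ud z)-\ud M_t$, times the finite-variation process $e^{\eta(\overline{Y}^{\mathbf 0}-\overline{Y}^{\mathbf u})e^{rT}}$, whose jumps occur only at the jumps of $m$ with multiplicative factor $\beta(s,z)=e^{-\eta e^{r(T-s)}z(1-\gamma^\b(u^\b_s))}$ and whose absolutely continuous part is $\eta e^{r(T-s)}(c_1(u^\a_s)+c_2(u^\b_s))\,\ud s$. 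After collecting the three resulting $m$-integrals (from $\Theta^R$, from the jumps of the exponential, and from the cross term) and rewriting $\widetilde m^0=\widetilde m^{\mathbf u}+(\nu^{\mathbf u}-\nu^0)$ to isolate the $\P^{\mathbf u}$-martingale measure $\widetilde m^{\mathbf u}$, the absolutely continuous part combines with the $\nu^{\mathbf u}$- and $\nu^0$-compensators. The key algebraic step, using the simplification $(\Theta^R(s,z)+R_{s^-})(\beta-1)+\Theta^R(s,z)=\Theta^R(s,z)\beta+R_{s^-}(\beta-1)$ of the integrand in \eqref{ftilde}, is to recognize the $\P^{\mathbf u}$-drift density of $\Phi^{\mathbf u}$ as
\[
e^{\eta(\overline{Y}^{\mathbf 0}_{s^-}-\overline{Y}^{\mathbf u}_{s^-})e^{rT}}\Big(\esssup_{\mathbf u\in\mathcal U} f\big(s,R_{s^-},\Theta^R(s,\cdot),u^\a_s,u^\b_s\big)-f\big(s,R_{s^-},\Theta^R(s,\cdot),u^\a_s,u^\b_s\big)\Big).
\]
Since the exponential prefactor is strictly positive and the bracket is nonnegative by definition of the essential supremum, $\Phi^{\mathbf u}$ is a $\P^{\mathbf u}$-submartingale for every $\mathbf u\in\mathcal U$; for $\mathbf u^*$ attaining the supremum the bracket vanishes, so $\Phi^{\mathbf u^*}$ is a $\P^{\mathbf u^*}$-martingale. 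This is exactly condition (i).

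Two points require care, and the second is the main obstacle. First, the orthogonal term $M$ must be controlled under the measure change: because $L^{\mathbf u}$ is the stochastic exponential of $\int(\gamma^\a(u^\a_s)-1)\,\widetilde m^0$, driven solely by $\widetilde m^0$, the orthogonality $M\in\mathcal L^\perp$ yields $\langle M,L^{\mathbf u}\rangle=0$, so by Girsanov $M$ stays a $\P^{\mathbf u}$-local martingale and contributes no drift; this is precisely what allows a general filtration. Second, and more delicate, the computation above only delivers a \emph{local} sub/super-martingale, whereas Proposition~\ref{VT1} needs genuine (sub)martingales. The plan is to localize along a sequence of stopping times and then remove the localization by uniform integrability, dominating the stochastic integrals against $\widetilde m^{\mathbf u}$ and $M$ together with the exponential factors by means of the moment bounds \eqref{Ass2} and \eqref{lambda_cond} (and of $0<\gamma^\a\le1$, $\gamma^\b\le1$ and the boundedness of $c_1,c_2$, as already exploited in Proposition~\ref{Adm}). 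Once the genuine submartingale property for all $\mathbf u$ and the martingale property at $\mathbf u^*$ are secured, Proposition~\ref{VT1} gives $D_t=V_t$, hence $R_t=W^{\mathbf 0}_t$ $\P^0$-a.s. and the optimality of $\mathbf u^*$.
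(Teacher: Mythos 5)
Your proposal is correct and follows essentially the same route as the paper: the same choice $D_t=R_te^{\eta\overline{Y}^{\mathbf 0}_te^{rT}}$, the same It\^o/Girsanov computation identifying the $\P^{\mathbf u}$-drift as a positive factor times $\esssup_{\mathbf u}f-f$, and the same use of $\langle M,L^{\mathbf u}\rangle=0$ to handle the orthogonal term under a general filtration, feeding into Proposition \ref{VT1}. The only (immaterial) differences are a sign slip on $\ud M_t$ in your expression for $\ud R_t$ and that where you propose localization plus uniform integrability, the paper instead verifies the true-martingale property directly by bounding the compensator integrals with the same moment conditions \eqref{lambda_cond}, $R\in\mathcal{L}^\b$ and $\Theta^R\in\widehat{\mathcal{L}}$.
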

\begin{proof}
Let $(R,\Theta^{R}, M) \in \mathcal{L}^\b \times \widehat{\mathcal{L}} \times \mathcal{L}^\perp $ be a solution to the BSDE \eqref{bsde}.
Define the process $D=\{D_t,\ t \in [0,T]\}$ by setting $D_t := R_t e ^{\eta {\overline Y}^{\mathbf 0}_t e^{rT}}$, $t \in [0,T]$. We will prove that $D$ satisfies conditions $(i)$ and $(ii)$ in Proposition \ref{VT1} hence $D_t = V_t$ $\P^0$-a.s. and so $R_t=W^{\mathbf 0}_t$ $\P^0$-a.s., for every $t \in [0,T]$.
First, observe that $(ii)$ is trivially satisfied because $D_T := R_T e ^{\eta \overline Y^{u,0}_T e^{rT}} = 1$, $\P^0$-a.s., in view of the terminal condition of the BSDE \eqref{bsde}. 
Next, we focus on item $(i)$. By It\^o's product rule we have that
\begin{align*}\label{ito}
&  \ud (D_t \ e^{ - \eta \overline{Y}^{\mathbf u}_t e^{rT}})  =   \ud  ( e^{  \eta (\overline{Y}^{\mathbf 0}_{t} - \overline{Y}^{\mathbf u}_{t})e^{rT}} \ R_t) \\
& =  e^{\eta (\overline{Y}^{\mathbf 0}_{t^-}-\overline{Y}^{\mathbf u}_{t^-}) e^{rT}}\ud R_t + R_{t-} \ud ( e^{  \eta (\overline{Y}^{\mathbf 0}_t - \overline{Y}^{\mathbf u}_t)e^{rT}} ) + \ud \left ( \sum_{s \leq t} \Delta R_s \ \Delta \big(e^{  \eta (\overline{Y}^{\mathbf 0}_s - \overline{Y}^{\mathbf u}_s)e^{rT}}\big) \right).
\end{align*}
Observing that
\begin{equation}\label{nuova}
\overline{Y}^{\mathbf 0}_t -   \overline{Y}^{\mathbf u}_t=  - \int_0^t \int_{0}^{+\infty }(1-\gamma^\b(u_s^\b)) e^{-rs} z m(\ud s,\ud z) + \int_0^t e^{-rs} (c_1(u_s^\a) + c_2(u_s^\b) ) \ud s,
\end{equation}
by It\^o's formula we get
\begin{align*}\ud ( e^{  \eta (\overline{Y}^{\mathbf 0}_t - \overline{Y}^{\mathbf u}_t)e^{rT}} ) = & \eta e^{rT} e^{  \eta (\overline{Y}^{\mathbf 0}_t - \overline{Y}^{\mathbf u}_t)e^{rT}} e^{-rt} [c_1(u_t^\a) + c_2(u_t^\b)] \ud t\\
&+ e^{  \eta (\overline{Y}^{\mathbf 0}_{t^-} - \overline{Y}^{\mathbf u}_{t^-})e^{rT}} \int_0^{+\infty} (e^{ - \eta (1-\gamma^\b(u_t^\b)) z e^{r(T-t)}} -1) m(\ud t, \ud z)
\end{align*}
and 
\begin{equation*}
\begin{split}
&\ud \left ( \sum_{s \leq t} \Delta R_s \ \Delta \big(e^{  \eta (\overline{Y}^{\mathbf 0}_s - \overline{Y}^{\mathbf u}_s)e^{rT}}\big) \right) \\
& \qquad = \int_0^{+\infty} \Theta^{R} (t, z) e^{  \eta (\overline{Y}^{\mathbf 0}_{t^-} - \overline{Y}^{\mathbf u}_{t^-})e^{rT}} (e^{ - \eta (1-\gamma^\b(u_t^\b)) z e^{r(T-t)}} -1) m(\ud t, \ud z).
\end{split}
\end{equation*}
Finally, recalling by \eqref{bsde} that 
$$
\ud R_t =   \int_0^{+ \infty} \Theta^{R} (t, z) \widetilde m^{0}(\ud t, \ud z) + \ud M_t + \esssup_{\mathbf u \in \mathcal U} f ( t, R_{t^-} , \Theta^{R}(t,\cdot),\mathbf u_t)\,\ud t ,
$$
and 
$$
\widetilde m^{0}(\ud t,\ud z) = \widetilde m^{\mathbf u}(\ud t,\ud z)  + \nu^{\mathbf u}(\ud t, \ud z)- \nu^0(\ud t, \ud z), 
$$
in view of \eqref{comp} and \eqref{ftilde}, we find:
\begin{align} 
& \ud (D_t \ e^{ - \eta \overline{Y}^{\mathbf u}_t e^{rT}})  \\
& = \ud \widetilde M^{\mathbf u}_t +  e^{  \eta (\overline{Y}^{\mathbf 0}_t - \overline{Y}^{\mathbf u}_t)e^{rT}} \left( \esssup_{\bar{\mathbf u} \in \mathcal U} f ( t, R_{t^-} , \Theta^{R}(t, \cdot),\bar{\mathbf u}_t) -  f ( t, R_{t^-} , \Theta^{R}(t, \cdot),\mathbf u_t) ) \right)\ud t, \label{eq:verif}
\end{align}
where $\widetilde M^{\mathbf u}=\{\widetilde M^{\mathbf u}_t,\ t \in [0,T]\}$ is an $(\mathbb F,\P^{\mathbf u})$-local martingale given by 
\begin{equation}\label{Mu}
\begin{split}
& \ud \widetilde M_t^{\mathbf u} = e^{  \eta (\overline{Y}^{\mathbf 0}_{t-} - \overline{Y}^{\mathbf u}_{t-})e^{rT}} \left(\int_0^{+ \infty} \Theta^{R} (t, z) \widetilde m^{\mathbf u}(\ud t, \ud z) + \ud M_t\right) \\ 
& \quad +
e^{  \eta (\overline{Y}^{\mathbf 0}_{t-} - \overline{Y}^{\mathbf u}_{t-})e^{rT}} \int_0^{+\infty}(R_{t^-} + \Theta^R(t,z)) (e^{ - \eta (1-\gamma^\b(u_t^\b)) z e^{r(T-t)}} -1) \widetilde m^{\mathbf u}(\ud t, \ud z). 
\end{split}
\end{equation}
We now prove that $\widetilde M^{\mathbf u}$ is a true $(\mathbb F,\P^{\mathbf u})$-martingale. 
We first show that
$$
\mathbb E^{\P^\mathbf u } \left[ \int_0^T \\\int_0^{+\infty}\!\! e^{  \eta (\overline{Y}^{\mathbf 0}_{t-} - \overline{Y}^{\mathbf u}_{t-})e^{rT}}\!\!\left( | \Theta^{R}(t,z) |  + |R_{t^-} + \Theta^{R}(t,z)||e^{ - \eta (1-\gamma^\b(u_t^\b)) z e^{r(T-t)}} -1 | \right)\nu^{\mathbf u} (\ud t, \ud z)  \right] < +\infty.
$$
Boundedness of the cost functions $c_i(u^{(i)})$, $i=1,2$,  and $0<\gamma^\b(u^\b)\leq 1$, imply boundedness of the process
 $e^{ \eta (\overline{Y}^{\mathbf 0}_{t-} - \overline{Y}^{\mathbf u}_{t-})e^{rT}}$.
So, in view of \eqref{comp},  we get
\begin{align*}
     &\mathbb E^{\P^\mathbf u } \left[ \int_0^T \!\!\int_0^{+\infty} e^{  \eta (\overline{Y}^{\mathbf 0}_{t-} - \overline{Y}^{\mathbf u}_{t-})e^{rT}} | \Theta^{R}(t,z) | \nu^{\mathbf u} (\ud t, \ud z)  \right]\\
     & \qquad \qquad \qquad \leq C_1 \mathbb  E^{\P^0} \left[ L_T^{\mathbf u} \int_0^T \int_0^{+\infty} | \Theta^{R}(t,z) |  \nu^0 (\ud t, \ud z) \right] & \\
    &  \qquad \qquad \qquad = C_1 \mathbb E^{\P^0} \left[ \int_0^T   \int_0^{+\infty} L_T^{\mathbf u}  | \Theta^{R}(t,z) | \lambda_t^0 F(\ud z) \ud t  \right], 
\end{align*}
for a suitable constant $C_1>0$.
By using the inequality $ab \le \frac12 (a^2 + b^2)$, for all $a,b \in \R$, with the choice $a=L_T^{\mathbf u}$ and $b=\Theta^{R}(t,z)$, we find:
\begin{equation}\label{eq:Ltheta}
\begin{split}
& \mathbb E^{\P^0} \left[ \int_0^T   \int_0^{+\infty} L_T^{\mathbf u}  | \Theta^{R}(t,z) | \lambda_t^0 F(\ud z) \ud t  \right]  \\
& \; \le \frac12 \mathbb E^{\P^0} \left[ \int_0^T  \left( L_T^{\mathbf u} \right)^2 \lambda_t^0 \ud t  \right] + \frac12 \mathbb E^{\P^0} \left[ \int_0^T   \int_0^{+\infty} | \Theta^{R}(t,z) |^2 \lambda_t^0 F(\ud z) \ud t  \right]. 
\end{split}
\end{equation}
The second expectation on the right-hand side in \eqref{eq:Ltheta} is finite since $\Theta^{R}$ belongs to $\widehat{\mathcal{L}}$.
For what concerns the first term, applying again the inequality $ab \le \frac12 (a^2 + b^2)$, for all $a,b \in \R$, we get, via  \eqref{stima}:
\begin{eqnarray*}
\mathbb E^{\P^0} \left[ \int_0^T  \left( L_T^{\mathbf u} \right)^2 \lambda_t^0 \ud t  \right]& \le & \frac12 \left( \mathbb E^{\P^0} \left[ \left( L_T^{\mathbf u} \right)^4  \right]\cdot T +
\mathbb E^{\P^0} \left[ \int_0^T    (\lambda_t^0)^2 \ud t \right] \right) \\
& \le & \frac12 \left( \mathbb E^{\P^0} \left[ e^{4 \int_0^T \lambda_s^0 \ud s}\right]\cdot T +
\mathbb E^{\P^0} \left[ \int_0^T    (\lambda_t^0)^2 \ud t \right] \right),
\end{eqnarray*}
which is finite thanks to assumption \eqref{lambda_cond}.
We now observe that 
\begin{align*}
&\mathbb E^{\P^\mathbf u } \left[ \int_0^T \int_0^{+\infty} |R_{t^-} + \Theta^{R}(t,z)||e^{ - \eta (1-\gamma^\b(u_t^\b)) z e^{r(T-t)}} -1 | \nu^{\mathbf u} (\ud t, \ud z)  \right] \\\
& \leq 
\mathbb E^{\P^\mathbf u } \left[ \int_0^T \int_0^{+\infty} \left(|R_{t^-}| + |\Theta^{R}(t,z)|\right)\lambda_t^0 F(\ud z) \ud t \right] \\
&= \mathbb E^{\P^0 } \left[ L^{\mathbf u}_T \int_0^T \int_0^{+\infty} \left(|R_{t^-}| + |\Theta^{R}(t,z)|\right)\lambda_t^0 F(\ud z) \ud t \right], 
\end{align*}
which is finite if we prove that $\mathbb E^{\P^0 } \left[ L^{\mathbf u}_T \int_0^T \int_0^{+\infty} |R_{t^-}| \lambda_t^0 F(\ud z) \ud t \right]$ is finite. So
\begin{eqnarray*}
\mathbb E^{\P^0 } \left[ L^{\mathbf u}_T \int_0^T \int_0^{+\infty} |R_{t^-}| \lambda_t^0 F(\ud z) \ud t \right] = \mathbb E^{\P^0 } \left[  \int_0^T L^{\mathbf u}_T |R_{t^-}| \lambda_t^0 \ud t \right]\\
\leq \frac12 \mathbb E^{\P^0} \left[ \int_0^T  \left( L_T^{\mathbf u} \right)^2 \lambda_t^0 \ud t  \right] + \frac12 \mathbb E^{\P^0} \left[ \int_0^T    | R_t |^2 \lambda_t^0 \ud t  \right] < +\infty,
\end{eqnarray*}
since
$$\mathbb E^{\P^0} \left[ \int_0^T    | R_t |^2 \lambda_t^0 \ud t  \right] \leq \frac12 \mathbb E^{\P^0} \left[\int_0^T    | R_t |^2  \ud t  \right] + \frac12 \mathbb E^{\P^0} \left[ \int_0^T    (\lambda_t^0)^2 \ud t \right] < +\infty $$
$$\mathbb E^{\P^0} \left[ \int_0^T  \left( L_T^{\mathbf u} \right)^2 \lambda_t^0 \ud t  \right]  \leq \frac12 \mathbb E^{\P^0} \left[ e^{4 \int_0^T \lambda_s^0 \ud s}\right]\cdot T +  
\frac12 \mathbb E^{\P^0} \left[ \int_0^T    (\lambda_t^0)^2 \ud t \right] < +\infty,
$$
in view of Assumptions \ref{lambda_cond} and  $R \in \mathcal{L}^\b$.
To prove that for any $u \in {\mathcal U}$, $\widetilde M^{\mathbf u}$ is a true $(\mathbb F,\P^{\mathbf u})$-martingale, it remains to show that the term in \eqref{Mu} driven by $M$ is an $(\mathbb F,\P^{\mathbf u})$-martingale. We first note that the process 
 $e^{ \eta (\overline{Y}^{\mathbf 0}_{t-} - \overline{Y}^{\mathbf u}_{t-})e^{rT}}$ is bounded. It is therefore sufficient to show  that $M$ is an $(\bF,\P^{\mathbf u})$-martingale.
 This is equivalent to proving that $ML^{\mathbf u}$ is an $(\bF,\P^0)$-martingale, which follows from the fact that $M$ and $L^{\mathbf u}$ are $(\bF,\P^0)$-orthogonal martingales. 
 Indeed, in view of \cite[Proposition 4.15]{js2013} it is sufficient to observe that
\begin{align*}
    \langle M,L^{\mathbf u}\rangle_t & = \left\langle M,\int_0^\cdot L^{\mathbf u}_{s^-}\int_0^{+\infty}(\gamma^\a(u^\a_s) -1)\widetilde m^0(\ud s, \ud z) \right\rangle_t\\
    &=\int_0^t L^{\mathbf u}_{s^-} \left \langle \ud M_{s^-},\int_0^{+\infty}(\gamma^\a(u^\a_s) -1)\widetilde m^0(\ud s, \ud z)\right\rangle = 0, \quad t \in [0,T].
\end{align*}
Hence, from Equation \eqref{eq:verif} we can see that the process $\{ D_t e^{ - \eta \overline{Y}^{u,e}_t e^{rT}},\ t \in [0, T] \}$ is an $(\mathbb{F},\P^{\mathbf u})$-submartingale.

Moreover, we can observe that the process $ f( t, R_{t^-} , \Theta^{R}(t, \cdot ) , \mathbf u)$ is $\mathbb F$-predictable process for any $\mathbf u=(u^\a,u^\b)\in[0,\zeta_1]\times [0,\zeta_2]$ and continuous in  $\mathbf u$.
Consequently, from measurability selection results (see \cite{benevs1971existence}) and Proposition \ref{Adm}, there exists a maximizer ${\mathbf u}^*\in\mathcal{U}$. 
Finally, by choosing $\mathbf u={\mathbf u}^*$ in Equation \eqref{eq:verif}, it follows that the process $\{ D_t e^{ - \eta \overline{Y}^{u,e^*}_t e^{rT}},\ t \in [0, T]\}$ is an $(\mathbb{F},\P^{{\mathbf u}^*})$-martingale.
\end{proof}

We now discuss existence of the solution to the BSDE \eqref{bsde}. Due to unboundedness of the claim arrival intensity, the generator of the BSDE is not Lipschitz and so we rely on  \cite{PPS2018}, where existence results for BSDEs with stochastic Lipschitz generators are provided. 
Precisely, to apply \cite[Theorem 3.5]{PPS2018}, we need to strengthen the assumptions in Theorem \ref{VerThm}.

\begin{theorem} 
\label{thm:bsde} 
Let us assume  \begin{equation}
    \label{GEN}
\forall a>0, \quad \mathbb{E}^{\P^0} \left[ e^{a \int_0^T \lambda_s^0 ds} \right] < +\infty, \quad \mathbb{E}^{\P^0}\left[ e^{aJ_T}\right]<+\infty.
\end{equation}
Then, there exists a solution in $\mathcal{L}^\b \times \widehat{\mathcal{L}} \times \mathcal{L}^{\perp}$ to the BSDE \eqref{bsde}.
\end{theorem}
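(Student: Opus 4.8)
The plan is to place the BSDE \eqref{bsde} inside the stochastic-Lipschitz framework of \cite{PPS2018} and then verify, one by one, the hypotheses of \cite[Theorem 3.5]{PPS2018}. The data to feed into that theorem are the terminal value $\Xi = e^{\eta e^{rT}\int_0^T\int_0^{+\infty}e^{-rt}z\,m(\ud t,\ud z)}$, the aggregated generator $g(t,y,\theta):=\esssup_{\mathbf u\in\mathcal U} f(t,y,\theta(\cdot),u^\a_t,u^\b_t)$, the compensated measure $\widetilde m^0$ as the jump driver, and an orthogonal martingale component $M\in\mathcal L^\perp$. This last ingredient is exactly what the general-filtration theory of \cite{ElKaroui_Huang,PPS2018} is built to accommodate, so no martingale representation property for $\mathbb F$ is needed.

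Before the main estimate I would record two elementary simplifications. First, since $e^{-rt}z\le z$ we have $\Xi\le e^{\eta e^{rT}J_T}$, so the strengthened clause $\mathbb E^{\P^0}[e^{aJ_T}]<+\infty$ for every $a>0$ puts $\Xi$ in every $L^p(\P^0)$; the weighted terminal requirement $\mathbb E^{\P^0}[e^{\beta A_T}\,\Xi^2]<+\infty$ (with $A$ the integrated Lipschitz coefficient below) then splits by Cauchy--Schwarz into a moment of $e^{\beta\int_0^T\lambda^0_s\ud s}$ and a moment of $e^{2\eta e^{rT}J_T}$, both finite under \eqref{GEN}. Second, evaluating $f$ at $(y,\theta)=(0,0)$ gives $f(t,0,0,\mathbf u)\equiv 0$, whence $g(t,0,0)\equiv 0$; this trivialises the integrability condition on the generator at the origin and reduces the remaining work to a pure Lipschitz estimate.

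The heart of the argument is the stochastic-Lipschitz estimate for $g$. Using $|\esssup_\mathbf u a_\mathbf u-\esssup_\mathbf u b_\mathbf u|\le\esssup_\mathbf u|a_\mathbf u-b_\mathbf u|$, it suffices to bound $|f(t,y_1,\theta_1,\mathbf u)-f(t,y_2,\theta_2,\mathbf u)|$ uniformly in $\mathbf u$. Exploiting boundedness of $c_1,c_2$ on $U$, the bounds $0<\gamma^\a\le1$, $0\le\gamma^\b\le1$, and the key observation that the kernel $K(t,z,u^\b):=e^{-\eta e^{r(T-t)}z(1-\gamma^\b(u^\b))}$ satisfies $K-1\in[-1,0]$, I would show that the increment in $y$ is controlled by $\big(\eta e^{rT}(c_1(\zeta_1)+c_2(\zeta_2))+\lambda^0_{t^-}\big)|y_1-y_2|$, while the increment in $\theta$ collapses to $\lambda^0_{t^-}\int_0^{+\infty}(\theta_1-\theta_2)(z)\,[1-\gamma^\a K]\,F(\ud z)$ with $0\le 1-\gamma^\a K\le1$, which by Cauchy--Schwarz against $\lambda^0_{t^-}F(\ud z)$ is dominated in the $\widehat{\mathcal L}$-seminorm by a coefficient of size $(\lambda^0_{t^-})^{1/2}$. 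Thus $g$ is stochastic-Lipschitz with both Lipschitz coefficients dominated by $C(1+\lambda^0_{t^-})$.

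It then remains to verify the exponential integrability that \cite[Theorem 3.5]{PPS2018} imposes on these coefficients, namely a moment of $e^{\beta A_T}$ with $A_T$ of the form $\int_0^T(r_s+\vartheta_s^2)\,\ud s$ where $r_s,\vartheta_s^2=O(1+\lambda^0_s)$; this reduces to $\mathbb E^{\P^0}[e^{\beta\int_0^T\lambda^0_s\ud s}]$, finite by \eqref{GEN}. Invoking \cite[Theorem 3.5]{PPS2018} then produces a solution triple $(R,\Theta^R,M)$ in the associated weighted spaces, which one identifies with $\mathcal L^\b\times\widehat{\mathcal L}\times\mathcal L^\perp$ under the standing integrability. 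The main obstacle I anticipate is not the Lipschitz estimate itself — which is routine once the bound $K-1\in[-1,0]$ is noted — but the bookkeeping of aligning the precise weighted norms and the exact form of the stochastic-Lipschitz condition of \cite{PPS2018} with the coefficients $O(1+\lambda^0)$ found above, and confirming that the \emph{for every} $a>0$ clause in \eqref{GEN} dominates the specific exponential weight that their theorem requires.
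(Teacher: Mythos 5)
Your proposal is correct and follows essentially the same route as the paper's proof: both invoke \cite[Theorem 3.5]{PPS2018}, verifying the same stochastic-Lipschitz estimate with coefficients of order $1+\lambda^0_t$ (the paper takes $\gamma_t=2(\lambda^0_{t^-}+\bar k)^2$ and $\bar\gamma_t=2\lambda^0_t$, with $\bar k=\eta e^{rT}(c_1(\zeta_1)+c_2(\zeta_2))$, matching your bounds via $K-1\in[-1,0]$), the same splitting of the weighted terminal condition $\mathbb E^{\P^0}[e^{\widehat\beta A_T}\Xi^2]$ into exponential moments of $\int_0^T\lambda^0_s\,\ud s$ and of $J_T$, and the same observation that $f(t,0,0,\mathbf u)\equiv 0$. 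The final ``bookkeeping'' you flag is settled in the paper exactly as you anticipate: since $A_t=\int_0^t\alpha_s^2\,\ud s$ is continuous, $\Delta A_t\le\Phi$ holds for every $\Phi>0$, so one chooses $\Phi<\frac{1}{18e}$ and then $\widehat\beta$ large enough that $M^\Phi(\widehat\beta)<\frac12$ as required by \cite[Lemma 3.4]{PPS2018}, which is precisely where the ``for every $a>0$'' clause of \eqref{GEN} is consumed.
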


The proof is postponed to Appendix \ref{app:proof}. 

 \begin{remark}\label{REWEAK}
The conditions in \eqref{GEN} can be relaxed to the following integrability requirements:
\begin{equation}\label{Weak}\mathbb E^{\P^0} \left[e^{4\eta e^{rT} J_T} \right]< + \infty, \quad \mathbb E^{\P^0} \left[e^{ 8\widehat \beta \int_0^T  \lambda^0_{s}\ud s }\right] < + \infty
\end{equation}
for a given constant $\widehat{\beta} > 0$. Specifically, the constant $\widehat{\beta}$ is defined such that condition \eqref{Mphi} holds with $\Phi < \frac{1}{18e }$; see Appendix~\ref{app:proof} for further details.
\end{remark}

Combining Theorems \ref{VerThm} and \ref{thm:bsde} yields our main result, characterizing the Snell envelope 
$W^0$ and the optimal prevention strategies and, as a consequence, the value process $V$, thanks to equation \eqref{eqn:VisWI}.
\begin{theorem}\label{MainResult}
Let us assume conditions in \eqref{GEN}. 
Then, $(W^{\mathbf 0}, \Theta, M) \in \mathcal{L}^\b \times \widehat{\mathcal{L}} \times \mathcal{L}^{\perp}$ is the unique solution to the following BSDE  
\begin{equation} \label{bsde1}
\begin{split}
W^{\mathbf 0}_t & = \Xi - \int_t^T \int_0^{+ \infty} \Theta(s, z) \widetilde m^{0}(\ud s, \ud z) + \\
	& - \int_t^T \esssup_{\mathbf u \in \mathcal U} f ( s, W^{\mathbf 0}_{s^-} , \Theta(s,\cdot),u_s^\a,u_s^\b)\,\ud s   + M_T-M_t,
    \end{split}
 \end{equation}
with terminal condition 
$$\Xi = e^{- \eta Y^{\mathbf 0}_T} = e^{\eta e^{rT} \int_0^T\int_0^{+\infty} e^{-rt} z m(\ud t, \ud z)}$$ and where
\begin{align*}\label{ftilde1}
&f( t, W^{\mathbf 0}_{t^-} , \Theta (t, \cdot ) , u_t^\a, u_t^\b)  = 
- W^{\mathbf 0}_{t^-} \eta e^{r(T-t)} (c_1(u_t^\a) + c_2(u_t^\b)) + \\ &+  \int_0^{+\infty} \Theta(t,z) \lambda^0_{t^-} F(\ud z)\\
& -  \gamma^\a(u_t^\a) \lambda^0_{t^-} \int_0^{+\infty} [(\Theta(t,z) + W^{\mathbf 0}_{t^-}) ( e^{-\eta e^{r(T-t)}z (1-\gamma^\b(u_t^\b))} -1) + \Theta(t,z)] F(\ud z).
    \end{align*}
    Moreover, any process $\mathbf u^*=(u^{\a,*},u^{\b,*})$ which realizes the essential supremum in \eqref{bsde1} provides an (admissible) optimal prevention strategy.

\end{theorem}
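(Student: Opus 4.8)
The plan is to deduce the statement by combining the existence result of Theorem \ref{thm:bsde} with the verification result of Theorem \ref{VerThm}, and then to settle uniqueness by means of the canonical decomposition of special semimartingales together with the orthogonality built into the definition of $\mathcal{L}^{\perp}$. No new BSDE estimates are needed: the two cited theorems already do the analytic work, so the task is to glue them and add the uniqueness argument.

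First I would check that the integrability conditions \eqref{GEN} are strong enough to trigger both earlier theorems. Theorem \ref{thm:bsde} applies verbatim under \eqref{GEN}, yielding a solution $(R,\Theta,M)\in\mathcal{L}^\b\times\widehat{\mathcal{L}}\times\mathcal{L}^{\perp}$ of the BSDE \eqref{bsde}. For Theorem \ref{VerThm} one needs the second condition in \eqref{Ass2} and conditions \eqref{lambda_cond}; these follow from \eqref{GEN} on taking $a=2\eta e^{rT}$ and $a=4$, respectively, while the quadratic moment $\mathbb{E}^{\P^0}[\int_0^T(\lambda_t^0)^2\,\ud t]<+\infty$ is dominated by the exponential integrability of $\lambda^0$ (via $x^2\le e^{2x}$ for $x\ge 0$ and Tonelli's theorem). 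Applying Theorem \ref{VerThm} to the solution produced above then identifies $R_t=W^{\mathbf 0}_t$ for all $t\in[0,T]$, $\P^0$-a.s.; relabelling $R$ as $W^{\mathbf 0}$ turns \eqref{bsde} into \eqref{bsde1}, so $(W^{\mathbf 0},\Theta,M)$ is a solution with first component equal to the Snell envelope. The claim on optimal strategies is exactly the last part of Theorem \ref{VerThm}: since $f$ is $\mathbb{F}$-predictable and continuous in $\mathbf u$ on the compact set $U$, the measurable selection argument of \cite{benevs1971existence} furnishes a maximizer $\mathbf u^*\in\mathcal{U}$, which Theorem \ref{VerThm} shows to be optimal, and the value process $V$ is recovered from $W^{\mathbf 0}$ through \eqref{eqn:VisWI}.

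For uniqueness I would argue as follows. Theorem \ref{VerThm} forces the first component of \emph{any} solution to equal $W^{\mathbf 0}$, so two solutions $(W^{\mathbf 0},\Theta,M)$ and $(W^{\mathbf 0},\Theta',M')$ exhibit one and the same process $W^{\mathbf 0}$ as a special semimartingale. By uniqueness of the canonical decomposition, their predictable finite-variation parts coincide and, in particular, their local-martingale parts agree:
\begin{equation*}
\int_0^\cdot\!\!\int_0^{+\infty}\Theta(s,z)\,\widetilde m^0(\ud s,\ud z)+M_\cdot
=\int_0^\cdot\!\!\int_0^{+\infty}\Theta'(s,z)\,\widetilde m^0(\ud s,\ud z)+M'_\cdot .
\end{equation*}
Since $M,M'\in\mathcal{L}^{\perp}$ are orthogonal to every stochastic integral against $\widetilde m^0$, this identity is a Galtchouk--Kunita--Watanabe decomposition of a single square-integrable martingale, whose two components are unique; hence $\Theta=\Theta'$ ($\ud\P^0\otimes\lambda_t^0F(\ud z)\,\ud t$-a.e.) and $M=M'$ up to indistinguishability. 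This gives uniqueness in $\mathcal{L}^\b\times\widehat{\mathcal{L}}\times\mathcal{L}^{\perp}$.

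The main obstacle I expect is precisely this uniqueness step. Because the generator contains $\esssup_{\mathbf u}f$, which itself depends on $\Theta$, the BSDE \eqref{bsde1} is not of a form to which a contraction or comparison argument applies directly, so the standard route to uniqueness is unavailable. The device that circumvents the difficulty is to use the verification theorem to pin down the first component $W^{\mathbf 0}$ \emph{unconditionally}, i.e. independently of $(\Theta,M)$, and only afterwards to exploit the uniqueness of the canonical semimartingale decomposition and of the orthogonal decomposition relative to $\widetilde m^0$ in order to recover $(\Theta,M)$; everything else reduces to checking that \eqref{GEN} supplies the hypotheses of the two preceding theorems.
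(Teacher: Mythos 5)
Your overall architecture coincides with the paper's: Theorem~\ref{MainResult} is obtained there precisely by combining the existence result of Theorem~\ref{thm:bsde} with the verification Theorem~\ref{VerThm}, and your uniqueness argument --- first component pinned down unconditionally by the verification theorem, then uniqueness of the canonical decomposition of the special semimartingale $W^{\mathbf 0}$, then orthogonality of $M'-M\in\mathcal{L}^\perp$ to the stochastic integral $\int(\Theta-\Theta')\,\widetilde m^0(\ud s,\ud z)$, forcing a martingale orthogonal to itself to vanish --- is correct and actually supplies a step the paper leaves implicit in the word ``unique''. Your reductions of \eqref{GEN} to the second condition in \eqref{Ass2} (take $a=2\eta e^{rT}$) and to the exponential condition in \eqref{lambda_cond} (take $a=4$) are also fine.

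However, one checking step fails. You assert that $\mathbb{E}^{\P^0}\left[\int_0^T(\lambda^0_t)^2\,\ud t\right]<+\infty$ follows from \eqref{GEN} via $x^2\le e^{2x}$ and Tonelli. Applied pointwise in $t$, that inequality gives $\mathbb{E}^{\P^0}\left[\int_0^T(\lambda^0_t)^2\,\ud t\right]\le\int_0^T\mathbb{E}^{\P^0}\left[e^{2\lambda^0_t}\right]\ud t$, i.e., it requires exponential moments of $\lambda^0_t$ at each \emph{fixed} time, whereas \eqref{GEN} only controls exponential moments of the time integral $\int_0^T\lambda^0_s\,\ud s$. These are genuinely incomparable: the deterministic, strictly positive, predictable intensity $\lambda^0_t=t^{-1/2}$ satisfies \eqref{GEN} (the time integral is the constant $2\sqrt{T}$, and Assumption~\ref{ass:epsilon} holds trivially), yet $\int_0^T(\lambda^0_t)^2\,\ud t=+\infty$; Cauchy--Schwarz also runs in the wrong direction, since $\left(\int_0^T\lambda^0_t\,\ud t\right)^2\le T\int_0^T(\lambda^0_t)^2\,\ud t$. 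So \eqref{GEN} does not imply the second condition in \eqref{lambda_cond}, and your verification of the hypotheses of Theorem~\ref{VerThm} is incomplete at exactly this point. In fairness, the paper's own one-line proof (``combining Theorems~\ref{VerThm} and~\ref{thm:bsde}'') has the same soft spot --- note that Remark~\ref{REweak1} lists \eqref{lambda_cond} as a \emph{separate} hypothesis in the weakened variant rather than deriving it --- so the honest repair is to add the quadratic-moment condition as an explicit assumption (or verify it directly in the model classes of Subsection~\ref{Exa}, where it does hold, e.g.\ for bounded or shot-noise intensities), not to claim it as a consequence of \eqref{GEN}.
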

 \begin{remark}\label{REweak1}
According to Remark~\ref{REWEAK}, Theorem \ref{MainResult} remains valid if the original conditions in  \eqref{GEN} are replaced by the weaker assumptions \eqref{lambda_cond} and \eqref{Weak}.
\end{remark}

We emphasize that our results are valid without any assumptions on the structure of the filtration $\bF$. Therefore, they are applicable to a wide range of risk models. When more information on filtration $\bF$ is available and a martingale representation theorem can be applied, it is possible to specify the orthogonal martingale $\{M_t,\ t\in [0,T]\}$. 

In the following, we give some examples of risk models for which Theorem \ref{MainResult} applies.
\subsection{Some classes of risk models}\label{Exa}
 \begin{itemize}
 \item[(i)]  We consider the case where $\lambda^0$ is a bounded process, that is $\lambda^0_t \leq \Lambda$, $\mathbb{P}^0-a.s.$ for any $t\in [0,T]$, with $\Lambda >0$ a constant.  The claim arrival intensity $N=\{N_t,\ t \in [0,T]\}$ can be defined as a  conditional Poisson process having $\lambda^0$ as intensity, that is
 $$
 \P^0(N_t-N_s=k| \F_T^{\lambda^0} \vee \F_s) = \frac{(\int_s^t \lambda^0_v \ud v)^k}{k!} e^{-\int_s^t \lambda^0_v \ud v}, \ 0\leq s\leq t\leq T,\ k=0,1, \ldots
 $$
 Assuming $N$ and the sequence of claim sizes $\{Z_i\}_{i\geq 1}$ to be independent and, $\mathbb{E}^{\P^{\mathbf 0}}[e^{a Z }] < + \infty$ for $a=4\eta e^{rT}$, we have that  conditions \eqref{lambda_cond} and \eqref{Weak} are satisfied. Indeed, 
 \begin{align*}
     \mathbb{E}^{\P^{\mathbf 0}}[e^{aJ_T }] & = \mathbb{E}^{\P^{\mathbf 0}}[ \mathbb{E}^{\P^{\mathbf 0}}[e^{a J_T }|N_T]] = \mathbb{E}^{\P^{\mathbf 0}}[ (\mathbb{E}^{\P^{\mathbf 0}}[e^{a Z}])^{N_T}] \\
& = \mathbb{E}^{\P^{\mathbf 0}}\left [ e^{( \mathbb{E}^{\P^{\mathbf 0}}[e^{a Z}] -1) \int_0^T \lambda^0_s \ud s}\right] \leq  e^{( \mathbb{E}^{\P^{\mathbf 0}}[e^{a Z}] -1)\Lambda T} .
\end{align*}
According to Remark \ref{REweak1}, therefore Theorem \ref{MainResult} applies. 

 This model encompasses several important cases, including: constant claim arrival intensity; Markov-modulated models where the intensity is driven by a finite-state continuous-time Markov chain independent of the claim sizes $\{Z_i\}_{i \geq 1}$; and stochastic factor models in which $\lambda^0_t = \Gamma(t, Y_{t^-})$, with $\Gamma(t, y)$ a bounded function and $\{Y_t,\ t \in [0,T]\}$ a càdlàg process independent of $\{Z_i\}_{i \geq 1}$.

 \item[(ii)] More complex models allow for unbounded claim arrival intensity and self-exciting features. For instance,  the contagion model proposed in \cite{BCCS_2024, BCCS_2025} combines Hawkes and Cox processes with shot noise intensity.  
Precisely, $\{\lambda^0_t,\ t \in [0,T]\}$ follows the dynamics:
\begin{equation}\label{intensity_int}
\lambda_t^0 = \beta + (\lambda_0^0 - \beta)  e^{-\alpha t} + \sum_{i=1}^{N_t} e^{-\alpha (t -  T_i)} \ell(Z_i) +  \sum_{i=1}^{\widetilde N_t}  e^{-\alpha (t - \widetilde T_i)} \widetilde Z_i,
\end{equation}
where $\lambda_0^0>0$, $\beta>0 $ and $\alpha>0$ play the role of initial datum, mean-reversion speed and long-term mean, $\ell$ is a positive function modeled self-exciting jump sizes, $\{\widetilde Z_i\}_{i \geq 1}$ is a sequence of nonnegative i.i.d.  random variables, $\widetilde N=\{\widetilde N_t = \sum_{i \geq 1}\mathbbm{1}_{\{ \widetilde T_i \leq t\} },\ t \in [0,T]\}$,  is a Poisson process with intensity $\rho>0$, $\{Z_i\}_{i \geq 1}$, $\{\widetilde Z_i\}_{i \geq 1}$  and $\widetilde N$ are assumed mutually independent.
In this model $\lambda^0$ is an unbounded process, and the claim arrival process is not independent of claim sizes.
Since the process given in \eqref{intensity_int} is càdlàg, we take as predictable intensity its left continuous version $\{\lambda^0_{t^-},\ t \in [0,T]\}$.  Under the assumptions, for any $a >0$
$$\mathbb{E}^{\P^{\mathbf 0}}[e^{a Z}] < +\infty,\quad \mathbb{E}^{\P^{\mathbf 0}}[e^{a \ell(Z) }] < +\infty, \quad \mathbb{E}^{\P^{\mathbf 0}}[e^{a \widetilde Z }]< +\infty$$
hypotheses \eqref{GEN} in Theorem \ref{MainResult} are satisfied (see \cite[Proposition 1]{BCCS_2025}  and \cite[Lemma 4.6, Lemma B.1.]{BCCS_2024} for details).
\end{itemize}

\section{The self-protection problem for an insurer and incentives}\label{sec:self-prot}

We now discuss the special case of pure self-protection, that is, the situation in which the agent can only take actions aimed at reducing the claim arrival intensity. We consider the same setting as in \cite{bhk2023,BCCS_2025}. Precisely, dynamic self-insurance is replaced now by the choice of a market insurance protection. We then show how our general framework recovers the results in \cite{bhk2023,BCCS_2025} as a special case.

Specifically, the agent can enter into a proportional insurance contract over the period $[0,T]$, selecting a retention level $\theta \in [0,1]$, and pay a premium $\Pi(\theta)\geq 0$ at time $t=0$. The insurance coverage is assumed to remain fixed throughout the interval $[0,T]$.
Moreover, to incentivize self-protection actions, the insurance company offers the policyholder a terminal reimbursement given by given by an $\F_T$-random variable, $\xi^\theta \in [0, \Pi(\theta)e^{r T}]$.
The optimization problem can now be formulated as follows:
 \begin{equation}\label{(P1)}
 \begin{split}
v^\a & =  \inf_{(\theta, u^\a) \in [0,1] \times \mathcal{U}^\a} \!e^{ \eta  e^{rT} (\Pi(\theta) -x_0)}
\mathbb{E}^{\P^{u^\a}}\!\left[ e^{\eta \left( \theta \int_{0}^{T} \int_{0}^{+\infty }  e^{r(T-s)}  z m(\ud s,\ud x) +\int_{0}^{T} e^{r(T-s)} c_1(u_s^\a) \ud s \right)} \right] \\
&= \inf_{(\theta, u^\a) \in [0,1] \times \mathcal{U}^\a} e^{  \eta  e^{rT} ( \Pi(\theta) -x_0)} \mathbb{E}^{\P^{u^\a}} \!\left[ e^{- \eta \left(Y_T^{\theta, u^\a} + \xi^\theta\right)} \right],
\end{split}
\end{equation}
where 
\begin{equation*}\label{wealth_Y1}
Y_t^{\theta, u^\a} = - \int_0^t \int_{0}^{+\infty } \theta e^{r(t-s)} z m(\ud s,\ud z) - \int_0^t e^{r(t-s)} c_1(u_s^\a) \ud s, \quad t\in [0,T],
\end{equation*}
and $\mathcal{U}^\a$ is the subset of admissible strategies $\mathbf u_t = (u^\a_t,0) \in \mathcal{U}$.
Problem \eqref{(P1)} reads as 
\begin{equation*} \label{(P2)} 
v^\a= \inf_{\theta \in [0,1]} e^{  \eta  e^{rT} ( \Pi(\theta) -x_0)} v_\theta\end{equation*}
where
\begin{align}\label{(P2bis)}
v_\theta =  \inf_{u^\a  \in \mathcal{U}^\a} 
\mathbb{E}^{\P^{u^\a} } \left[ e^{- \eta \left(Y_T^{\theta, u^\a} + \xi^\theta\right)} \right].
\end{align}
Fixed $\theta \in [0,1]$, we focus on the self-protection problem \eqref{(P2bis)}, and then the minimization over $\theta$ can be discussed as in \cite{BCCS_2025}.
We now introduce the Snell-Envelope and the value process associated to problem \eqref{(P2bis)}.
Precisely, the Snell-Envelope associated to null self-protection strategy, is
\begin{equation*}
\label{eqn:WC}
W^{\theta, 0}_t = \essinf_{\bar{ u}^\a\in\mathcal{U}^\a(t,0)}
{\mathbb{E}^{\P^{\bar{u}^\a} }\biggl[e ^{-\eta \left(Y^{\theta,\bar{u}^\a}_T + \xi ^\theta\right) } \mid \mathcal{F}_t\biggr]}, \quad \forall t \in [0,T].
\end{equation*}
In \cite{bhk2023} the optimization problem is discussed when the underlying claim arrival intensity is constant and in \cite{BCCS_2025}
for the contagion model (see Subsection \ref{Exa}(ii)). We now provide a result which extends \cite[Theorem 3 ]{BCCS_2025} (in the case $\gamma^\a=\gamma^\b$) to general risk models. 

\begin{theorem}\label{SELF} Let us assume \eqref{GEN}.
Then, $(W^{\theta, 0}, \Theta^\theta, M^\theta) \in \mathcal{L}^\b \times \widehat{\mathcal{L}}\times \mathcal{L}^{\perp}$ is the unique solution to the following BSDE  
\begin{equation} \label{bsdeC}
\begin{split}
W^{\theta, 0}_t & = \Xi^\theta - \int_t^T \int_0^{+ \infty} \Theta^\theta(s, z) \widetilde m^{0}(\ud s, \ud z) + \\
	&- \int_t^T \esssup_{u^\a \in \mathcal{U}^\a} f^\a ( s, W^{\theta, 0}_{s^-} , \Theta^\theta(s,\cdot),u_s^\a)\,\ud s+ M^\theta_T-M^\theta_t,
    \end{split}
 \end{equation}
with terminal condition 
$$\Xi^\theta = e^{- \eta  \left(Y^{\theta, 0}_T + \xi^\theta\right)}
$$ and where
\begin{equation}\label{ftildeC}
\begin{split}
&f^\a ( t, W^{\theta, 0}_{t^-} , \Theta (t, \cdot ) , u_t^\a) \\
& \qquad = 
- W^{\theta, 0}_{t^-} \eta e^{r(T-t)} c_1(u_t^\a) + 
(1 -  \gamma^\a(u_t^\a)) \lambda^0_{t} \int_0^{+\infty}  \Theta(t,z) F(\ud z).
\end{split}
    \end{equation}
    Moreover, any process $u^{(1),*}$ which realizes the essential supremum in \eqref{bsdeC} provides an (admissible) optimal self-protection strategy for the optimization problem \eqref{(P2bis)} and $v_\theta=W^{\theta,0}_0.$
\end{theorem}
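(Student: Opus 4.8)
The plan is to derive Theorem \ref{SELF} as a specialization of the general machinery developed in Theorems \ref{VerThm} and \ref{thm:bsde}, rather than reproving everything from scratch. The pure self-protection problem \eqref{(P2bis)} is obtained from the general problem by freezing the severity control, i.e. by restricting to efforts of the form $\mathbf u_t = (u^\a_t, 0)$, which forces $\gamma^\b(u^\b_t) = \gamma^\b(0) = 1$ and $c_2(u^\b_t) = c_2(0) = 0$ throughout. The auxiliary terminal payoff is modified only by the deterministic (given $\theta$) rescaling of the jump integrand by $\theta$ and by the additive $\F_T$-measurable incentive $\xi^\theta$, which enters the terminal condition as $\Xi^\theta = e^{-\eta(Y^{\theta,0}_T + \xi^\theta)}$. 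So the first step is to set up this reduction explicitly and check that the integrability hypotheses carry over: under \eqref{GEN}, and since $\xi^\theta \in [0, \Pi(\theta)e^{rT}]$ is bounded and $\theta \le 1$, the terminal condition $\Xi^\theta$ satisfies the same exponential-moment bounds used in Theorem \ref{thm:bsde}, so existence and uniqueness of the BSDE solution transfer directly.

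Next I would specialize the generator. Substituting $\gamma^\b(u^\b_t) = 1$ into the general driver \eqref{ftilde} kills the severity term: the factor $e^{-\eta e^{r(T-t)} z(1 - \gamma^\b(u^\b_t))} - 1$ vanishes identically, so the entire last integral collapses. What survives is
\begin{equation*}
f^\a(t, W^{\theta,0}_{t^-}, \Theta(t,\cdot), u^\a_t) = -W^{\theta,0}_{t^-}\eta e^{r(T-t)} c_1(u^\a_t) + \int_0^{+\infty}\Theta(t,z)\lambda^0_{t^-}F(\ud z) - \gamma^\a(u^\a_t)\lambda^0_{t^-}\int_0^{+\infty}\Theta(t,z)F(\ud z),
\end{equation*}
which, after combining the two $\Theta$-integrals, is precisely \eqref{ftildeC}. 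This is a direct computation, so I would present it compactly and then invoke the verification argument of Theorem \ref{VerThm} verbatim: define $D_t := W^{\theta,0}_t e^{\eta \overline Y^{\theta,0}_t e^{rT}}$ and check via the same It\^o product-rule decomposition \eqref{eq:verif} that $\{D_t e^{-\eta \overline Y^{\theta,u^\a}_t e^{rT}}\}$ is an $(\bF,\P^{u^\a})$-submartingale for every admissible $u^\a$, and a martingale at the maximizer. The orthogonal-martingale term $M^\theta$ is handled by the identical $\langle M^\theta, L^{u^\a}\rangle = 0$ computation. The existence of a measurable maximizer $u^{(1),*}$ follows from continuity of $f^\a$ in $u^\a$ together with the same selection theorem cited in the proof of Theorem \ref{VerThm}.

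The identification $v_\theta = W^{\theta,0}_0$ then follows from the dynamic-programming identity analogous to \eqref{eqn:VisWI}, evaluated at $t=0$, where the discounted auxiliary wealth $\overline Y^{\theta,0}_0 = 0$ and $\F_0$ is trivial, so $W^{\theta,0}_0 = V^\theta_0 = v_\theta$. The main obstacle, though largely bookkeeping, is verifying that restricting the control set to $\mathcal U^\a$ (the slice $u^\b \equiv 0$) does not disturb the essential-infimum/Kallianpur-Striebel argument behind \eqref{eqn:W^{u,e}_and_V}; one must confirm that the conditional expectation defining the value process still does not depend on the past of the control, since only $\gamma^\a$ now appears in the Radon-Nikod\'ym ratio $L^{u^\a}_T/L^{u^\a}_t$, exactly as in Remark \ref{remark:Kall}. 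Because $\gamma^\b$ has been eliminated, this dependence structure is in fact simpler than the general case, so the reduction goes through cleanly. The only genuinely new element relative to Theorem \ref{MainResult} is the presence of the bounded incentive $\xi^\theta$ in the terminal datum, and I would emphasize that its boundedness is exactly what preserves all the exponential-moment estimates.
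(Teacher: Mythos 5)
Your proposal is correct and follows essentially the same route as the paper: specialize the general driver \eqref{ftilde} by setting $\gamma^\b(u^\b)\equiv 1$ and $c_2\equiv 0$ to obtain \eqref{ftildeC}, rerun the verification argument of Theorem \ref{VerThm} (simplified, since the difference $\overline{Y}^{\theta,0}_t-\overline{Y}^{\theta,u^\a}_t$ now contains only the cost term and the jump-size component of $\widetilde M^{\mathbf u}$ drops out), and transfer existence from \cite[Theorem 3.5]{PPS2018} using boundedness of $\xi^\theta$ and $\theta\le 1$. The one point where ``invoke Theorem \ref{VerThm} verbatim'' is slightly loose is that Proposition \ref{VT1} must be applied with condition $(ii)$ modified to $D_T=e^{-\eta\xi^\theta}$ (as the paper notes explicitly), but your choice $D_t:=W^{\theta,0}_te^{\eta\overline{Y}^{\theta,0}_te^{rT}}$ together with the terminal datum $\Xi^\theta$ already yields exactly this, so no genuine gap results.
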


\begin{proof}
The case $\theta=1$ and $\xi^\theta=0$ follows directly from Theorem \ref{MainResult} taking $\gamma^\b(u^\b)=1$, for any $u^\b \in [0,\zeta_2]$. 
The proof in the general case ($\theta \in [0,1]$, $\xi^\theta >0$) follows by a verification argument similar to that of Theorem \ref{VerThm} and an existence result of solution to the BSDE \eqref{bsdeC}. Let $(R, \Theta^R, M) \in \mathcal{L}^\b \times \widehat{\mathcal{L}}\times \mathcal{L}^{\perp}$ be a solution to BSDE \eqref{bsdeC}. We follow the same lines as in the proof of Theorem \ref{VerThm}, with the key difference that \eqref{nuova} takes the form:
 \begin{equation*}\label{nuova1}
\overline{Y}^{\mathbf 0}_t -   \overline{Y}^{\mathbf u}_t= \int_0^t e^{-rs} c_1(u_s^\a) \ud s,
\end{equation*}
and, as a consequence, $\widetilde M_t^{\mathbf u}$ in \eqref{Mu} reduces to 
\begin{align*}\label{Mu1}
\ud \widetilde M_t^{\mathbf u} = &e^{  \eta (\overline{Y}^{\mathbf 0}_{t-} - \overline{Y}^{\mathbf u}_{t-})e^{rT}} \left(\int_0^{+ \infty} \Theta^{R} (t, z) \widetilde m^{\mathbf u}(\ud t, \ud z) + \ud M_t\right).
\end{align*}
As in the proof of Theorem \ref{VerThm} we can show that $\widetilde M^{\mathbf u}$ is an $(\bF, \P^{\mathbf u})$-martingale and that, for any maximizer $u^{(1),*}$  of $ f^\a$ in \eqref{ftildeC},  $\widetilde M^{u^{(1),*}}$ is an $(\bF, \P^{u^{(1),*}})$-martingale.  Consequently, applying Proposition \ref{VT1} (with condition $(ii)$ modified as $D_T= e^{-\eta \xi^\theta}$) we obtain that $R$ necessarily coincides with the Snell Envelope $W^{\theta, 0}$ and $u^{(1),*}$  yields an optimal admissible self-protection strategy.
The existence of a solution to BSDE \eqref{bsdeC} follows once again by applying \cite[Theorem 3.5]{PPS2018}, see for details Appendix A in \cite{BCCS_2025}.  
Combining the verification result with the existence of a solution to \eqref{bsdeC} yields the result. \end{proof}

\begin{remark}\label{REweak2}
As in Remark \ref{REweak1} we can observe that Theorem \ref{SELF} remains valid if the original conditions in  \eqref{GEN} are replaced by the weaker assumptions \eqref{lambda_cond} and \eqref{Weak}.
\end{remark}

\section{The Markovian case} \label{sec:markov}

From now on, we assume that conditions \eqref{lambda_cond} and \eqref{Weak} are in force. Therefore, by Remark \ref{REWEAK}, Theorem \ref{MainResult} applies. In this section, we discuss a Markovian framework where the underlying claim arrival intensity is a function of an $\mathbb{R}$-valued  càdlàg Markov stochastic process $Y$, which does not have common jump times with the loss process $J$. Specifically, the arrival intensity is given by $\lambda^0_t = \Gamma (t,Y_{t^-})$, $t \in [0,T]$, where $\Gamma (t,y)$ is a strictly positive measurable function. 
This framework encompasses  Example (i) in Subsection \ref{Exa}, as well as the Cox shot-noise intensity case, which arises in Example (ii) of Subsection \ref{Exa} by taking $l(z) = 0$ in equation \eqref{intensity_int} and setting $\Gamma(t, y) = y$. 
The Markovian setting and, the assumption of no common jump times between the loss process and the arrival intensity of the claim, allow us to characterize optimal prevention strategies as minimizer of an Hamiltonian that does not depend on the solution of the BSDE in equation \eqref{bsde1}, as shown in the following proposition.
\begin{proposition}\label{Mark}
    Let $\lambda^0_t = \Gamma (t,Y_{t^-})$ where $Y$ is a càdlàg $\mathbb{R}$-valued and  $(\bF, \P^0)$-Markov process, with no common jump times  with the loss process $J$. 
    Let $\mathbf u^*(t,y) = (u^{\a,*}(t,y), u^{\b,*}(t,y))$ be a minimizer of 
    \begin{equation}\label{psiu}
    \begin{split}
\psi^{\mathbf u}(t,y) 
&  =  \eta e^{r(T-t)} (c_1(u^\a) + c_2(u^\b)) + \\
& + \gamma^\a(u^\a) \Gamma(t,y) \int_0^{+\infty} ( e^{\eta e^{r(T-t)}\gamma^\b(u^\b) z} -1)  F(\ud z).
\end{split}
\end{equation}
Then, $\mathbf u^*_t = \mathbf u^*(t,Y_{t^-})$ is an optimal (Markovian) prevention strategy.
\end{proposition}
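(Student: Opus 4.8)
The plan is to read off the optimal control directly from Theorem \ref{MainResult}, which already tells us that an optimal effort is any $\mathbf u^*$ realizing the essential supremum of the generator $f$ of the BSDE \eqref{bsde1} at $(W^{\mathbf 0}_{t^-},\Theta(t,\cdot))$. Since the term $\int_0^{+\infty}\Theta(t,z)\lambda^0_{t^-}F(\ud z)$ in $f$ is independent of $\mathbf u$, it is enough to maximize the remaining, $\mathbf u$-dependent part over $\mathbf u$. The whole point of the Markovian assumption is that this maximization can be decoupled from the (a priori unknown) pair $(W^{\mathbf 0},\Theta)$ and reduced to the minimization of the explicit function $\psi^{\mathbf u}$ in \eqref{psiu}. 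The key step is therefore to identify the jump field $\Theta$ in closed form.

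First I would exploit the Markov structure. By \eqref{eqn:VisWI} one has $W^{\mathbf 0}_t=e^{A_t}V_t$ with $A_t:=\eta\int_0^t\int_0^{+\infty}e^{r(T-s)}z\,m(\ud s,\ud z)$, so $A$ jumps by $\eta e^{r(T-t)}z$ exactly when $J$ jumps by $z$ at time $t$. Using that $Y$ is $(\bF,\P^0)$-Markov, that $\lambda^0_t=\Gamma(t,Y_{t^-})$, and that the claim sizes are i.i.d.\ and independent of the evolution of $Y$, a dynamic-programming/Markov argument gives the representation $V_t=v(t,Y_{t^-})$ for a deterministic function $v$; in particular, because $Y$ has \emph{no common jump times} with $J$, the process $V$ does not jump at the jump times of $J$. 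Hence, at a jump of $J$ of size $z$ at time $t$, the only contribution to $\Delta W^{\mathbf 0}_t$ comes from $A_t$, so that $\Delta W^{\mathbf 0}_t=W^{\mathbf 0}_{t^-}(e^{\eta e^{r(T-t)}z}-1)$. Comparing this with the $\widetilde m^0$–driven part of \eqref{bsde1} — where, by the no-common-jumps hypothesis and the orthogonality of $M$ to $\widetilde m^0$, neither $M$ nor the finite-variation drift contributes a jump at a jump time of $J$ — yields the crucial identity
\begin{equation*}
\Theta(t,z)=W^{\mathbf 0}_{t^-}\big(e^{\eta e^{r(T-t)}z}-1\big),\qquad\text{equivalently}\qquad \Theta(t,z)+W^{\mathbf 0}_{t^-}=W^{\mathbf 0}_{t^-}\,e^{\eta e^{r(T-t)}z}.
\end{equation*}
I expect this identification of $\Theta$ — that is, rigorously establishing the Markov representation of $V$ and its continuity across the jumps of $J$ — to be the main obstacle; the no-common-jumps assumption together with $\lambda^0_t=\Gamma(t,Y_{t^-})$ is precisely what is needed to rule out a jump of $V$ at the jump times of $J$ (contrast with the self-exciting case, where $\lambda^0$ does jump with $J$).

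With the identity in hand, substituting into the last integral of the generator and using $e^{\eta e^{r(T-t)}z}\,e^{-\eta e^{r(T-t)}z(1-\gamma^\b(u^\b_t))}=e^{\eta e^{r(T-t)}\gamma^\b(u^\b_t)z}$ I obtain, after elementary algebra,
\begin{equation*}
(\Theta(t,z)+W^{\mathbf 0}_{t^-})\big(e^{-\eta e^{r(T-t)}z(1-\gamma^\b(u^\b_t))}-1\big)+\Theta(t,z)=W^{\mathbf 0}_{t^-}\big(e^{\eta e^{r(T-t)}\gamma^\b(u^\b_t)z}-1\big),
\end{equation*}
so that the generator collapses, with $\lambda^0_{t^-}=\Gamma(t,Y_{t^-})$, to
\begin{equation*}
f(t,W^{\mathbf 0}_{t^-},\Theta(t,\cdot),u^\a_t,u^\b_t)=\int_0^{+\infty}\Theta(t,z)\lambda^0_{t^-}F(\ud z)-W^{\mathbf 0}_{t^-}\,\psi^{\mathbf u}(t,Y_{t^-}),
\end{equation*}
where $\psi^{\mathbf u}$ is exactly \eqref{psiu}.

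Finally, since the exponent defining $V$ in \eqref{eqn:V} is nonnegative we have $V_t\ge 1>0$, and $e^{A_t}>0$, whence $W^{\mathbf 0}_{t^-}>0$. Therefore maximizing $f$ over $\mathbf u$ is equivalent to minimizing $\psi^{\mathbf u}(t,Y_{t^-})$, so that any minimizer $\mathbf u^*(t,y)$ of $\psi^{\mathbf u}(t,y)$ realizes the essential supremum in \eqref{bsde1} at $(t,Y_{t^-})$. The continuity of $\psi^{\mathbf u}$ in $\mathbf u$ over the compact set $U$ guarantees, via the same measurable-selection argument used in the proof of Theorem \ref{VerThm}, the existence of such a measurable minimizer, and its admissibility follows from Proposition \ref{Adm}. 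By Theorem \ref{MainResult}, $\mathbf u^*_t=\mathbf u^*(t,Y_{t^-})$ is then an optimal (Markovian) prevention strategy.
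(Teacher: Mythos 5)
Your proposal is correct and follows essentially the same route as the paper: represent the value process in Markovian form via \eqref{eqn:VisWI}, use the no-common-jumps hypothesis to conclude that $V$ does not jump at claim times and hence identify $\Theta(t,z)=W^{\mathbf 0}_{t^-}\bigl(e^{\eta e^{r(T-t)}z}-1\bigr)$, substitute into the generator so that the $\mathbf u$-dependent part collapses to $-W^{\mathbf 0}_{t^-}\,\psi^{\mathbf u}(t,Y_{t^-})$, and invoke Theorem \ref{MainResult} together with measurable selection. If anything, you are slightly more explicit than the paper on two points it leaves implicit --- that neither $M$ nor the drift contributes a jump at claim times, and that $W^{\mathbf 0}_{t^-}>0$ (so maximizing $f$ is genuinely equivalent to minimizing $\psi^{\mathbf u}$); only note that the Markov representation should read $V_t=\varphi(t,Y_t)$ rather than $v(t,Y_{t^-})$, which is immaterial at claim times by the no-common-jumps assumption.
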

\begin{proof}
For any Markovian control we have that the pair $(X^\mathbf u, Y)$ is an $(\bF, \P^\mathbf u)$-Markov processes.  Thus, the value process in Equation \eqref{eqn:V} can be written as $V_t = \varphi(t, Y_t)$ where
\begin{equation}
    \label{varphi1}
\varphi(t, y)=\inf_{\mathbf u \in\mathcal{U}_t}\mathbb{E}_{t,y}^{\P^{\mathbf u}}\left[e^{\eta\int_t^T e^{r(T-s)} \left( c_1(u^\a_s)+c_2(u^\b_s) \right) \,\ud s
+\eta\int_t^T\int_0^{+\infty} e^{r(T-s)} \gamma^\b(u^\b_s) z  \,m(\ud s,\ud z)}\right],
\end{equation}
with $\mathbb{E}_{t,y}^{\P^{\mathbf u}}[\cdot]$ denoting the expectation under $\P^{\mathbf u}$
and such that the stochastic factor $\{Y_s,\ s\in [t,T]\}$ satisfies the initial condition $Y_t=y$.
Recalling \eqref{eqn:VisWI} we get that $W^{\mathbf 0}_t=
\varphi(t, Y_t) e^{-\eta \overline{Y}^{\mathbf 0}_t e^{rT}}$, for any  $t \in [0,T]$, and so 
\begin{equation}\label{theta}
\Theta(T_n, Z_n)\!= \!W^{\mathbf 0}_{T_n} - W^{\mathbf 0}_{T_n^-} = e^{-\eta \bar Y^\mathbf 0_{T_n^-}e^{rT}} \!\!\left(\varphi (T_n, Y_{T_n}) e^{\eta Z_n e^{r(T-T_n)}} -  \varphi(T_n^-, Y_{T_n^-})\right).  
\end{equation}
Since $Y$ does not jump at any $T_n$ we have that  $\varphi (T_n, Y_{T_n}) = \varphi(T_n^-, Y_{T_n^-})$ and, by similar considerations as in  \cite[Remark 3.4]{BC_2020} and  in \cite[Remark 7]{ceci-cretarola2025}, we get that
$$\Theta(t,z) = e^{-\eta \overline{Y}^{\mathbf 0}_{t^-} e^{rT}} \varphi(t,Y_{t^-}) 
 (e^{\eta z e^{r(T-t)}} -1) \quad F(\ud z) \ud t \ud \P^0-{\rm a.e.}$$
 Finally, the proof follows from Theorem \ref{MainResult} observing that 
 \begin{align*}
&f(t, \mathbf {u}_t) = \varphi(t,Y_{t^-}) e^{-\eta \overline{Y}^{\mathbf 0}_{t^-} e^{rT}} \int_0^{+\infty} 
 (e^{\eta z e^{r(T-t)}} -1) \Gamma(t, Y_{t^-}) F(\ud z)  \\
& - e^{-\eta \overline{Y}^{\mathbf 0}_t e^{rT}} \varphi(t, Y_{t^-})\Big\{ \eta e^{\eta r (T-t)}(c_1(u_t^\a) + c_2(u_t^\b))\\
& \qquad + \gamma^\a(u_t^\a) \lambda^0_{t^-} \int_0^{+\infty} \Big( e^{\eta e^{r(T-t)}\gamma^\b(u^\b_t) z} -1 \Big )  F(\ud z) \Big\} \\
& \quad \quad \ \ = \varphi(t,Y_{t^-}) e^{-\eta \overline{Y}^{\mathbf 0}_{t^-} e^{rT}} \Big\{ \int_0^{+\infty} 
 (e^{\eta z e^{r(T-t)}} -1) \Gamma(t, Y_{t^-}) F(\ud z)  - \psi^{\mathbf u}(t,Y_{t^-})\Big\}.
    \end{align*}
    \end{proof}

\subsection{The case of constant underlying claim arrival intensity}

In this subsection we discuss the special case of constant underlying claim arrival intensity, i.e.
$\lambda_t^0=\lambda^0 \in \R^+$, for every $t \in [0,T]$. Here, we find an explicit solution to the BSDE \eqref{bsde1} and hence an explicit expression of the value function. Moreover, we show that optimal prevention strategies are deterministic function of time.
\begin{proposition}
Let $\lambda_t^0=\lambda^0 \in \R^+$, for every $t \in [0,T]$.
Then, the value function is given by
\begin{equation}\label{V}\varphi(t) = e^{\int_t^T \inf_{\mathbf u \in U} \psi^\mathbf u(s) \ud s},
\end{equation}
where 
\begin{equation}\label{eq:psi}
\begin{split}
 \psi^{\mathbf u}(t) = & \eta e^{r(T-t)}(c_1(u^\a)+c_2(u^\b))+ \\
 & \lambda^0 \gamma^\a(u^\a)\int_0^{+\infty}\left(e^{\eta \gamma^\b(u^\b)ze^{r(T-t)}}-1\right)F(\ud z).  
 \end{split}
\end{equation}
Moreover, any $\{\mathbf u^*(t), t\in [0,T]\}$ such that 
\begin{equation} \label{psi} \psi^{\mathbf u^*}(t) = \inf_{\mathbf u \in U} \psi^\mathbf u(t), \quad t \in [0,T]
\end{equation}
provides an  optimal (deterministic) prevention strategy.
\end{proposition}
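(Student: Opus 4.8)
The plan is to combine the Markovian reduction of Proposition~\ref{Mark} with an explicit guess-and-verify argument for the BSDE of Theorem~\ref{MainResult}. First I would observe that a constant intensity $\lambda^0_t \equiv \lambda^0$ fits the setting of Proposition~\ref{Mark} upon taking the Markov factor $Y$ to be constant: it trivially has no common jumps with $J$ and makes $\Gamma(t,y)=\lambda^0$. Consequently the Hamiltonian $\psi^{\mathbf u}(t,y)$ of \eqref{psiu} collapses to the state-independent $\psi^{\mathbf u}(t)$ of \eqref{eq:psi}, so any measurable selection $\mathbf u^*(t)$ realizing $\inf_{\mathbf u \in U}\psi^{\mathbf u}(t)$ --- which exists by continuity of $\psi^{\mathbf u}(t)$ in $\mathbf u$ and compactness of $U$, exactly as invoked in Theorem~\ref{VerThm} --- is an optimal, purely deterministic prevention strategy. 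This already yields the second assertion \eqref{psi}.

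For the value function I would verify that an explicit candidate triple solves the BSDE \eqref{bsde1} and then invoke its uniqueness. Setting $A_t := \eta \int_0^t \int_0^{+\infty} e^{r(T-s)} z\, m(\ud s,\ud z) = -\eta \overline{Y}^{\mathbf 0}_t e^{rT}$, I propose
\[
W^{\mathbf 0}_t = \varphi(t)\, e^{A_t}, \qquad \Theta(t,z) = W^{\mathbf 0}_{t^-}\big(e^{\eta e^{r(T-t)}z} - 1\big), \qquad M \equiv 0,
\]
with $\varphi$ the deterministic function in \eqref{V}; the form of $\Theta$ is exactly that dictated by \eqref{theta} in the constant-intensity case. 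The terminal condition $\Xi = e^{A_T}$ forces $\varphi(T)=1$, which \eqref{V} satisfies. I would then check the membership $W^{\mathbf 0}\in\mathcal{L}^\b$ and $\Theta\in\widehat{\mathcal{L}}$ using the standing assumptions \eqref{lambda_cond} and \eqref{Weak} together with an exponential moment of $Z$, along the lines of the admissibility estimate \eqref{stima_sum}.

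The heart of the verification is the computation of the generator on the candidate. Applying the Itô product/chain rule to $W^{\mathbf 0}_t = \varphi(t)e^{A_t}$ (with $\varphi$ of class $C^1$ and $A$ pure-jump) and rewriting the jump integral against $\widetilde m^0$ gives a drift coefficient $W^{\mathbf 0}_{t^-}\big[\varphi'(t)/\varphi(t) + K(t)\big]$, where the control-free term is $K(t) = \lambda^0 \int_0^{+\infty}(e^{\eta e^{r(T-t)}z} - 1)F(\ud z)$. On the other side, substituting $\Theta$ into the generator $f$ of Theorem~\ref{MainResult} and using the identity $\Theta(t,z) + W^{\mathbf 0}_{t^-} = W^{\mathbf 0}_{t^-}e^{\eta e^{r(T-t)}z}$, the bracketed integrand telescopes to $W^{\mathbf 0}_{t^-}(e^{\eta e^{r(T-t)}\gamma^\b(u^\b)z} - 1)$, so that $f(t,W^{\mathbf 0}_{t^-},\Theta(t,\cdot),u^\a,u^\b) = W^{\mathbf 0}_{t^-}\big[K(t) - \psi^{\mathbf u}(t)\big]$. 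Since $W^{\mathbf 0}_{t^-}>0$, taking the essential supremum over $\mathbf u$ simply replaces $\psi^{\mathbf u}(t)$ by $\inf_{\mathbf u\in U}\psi^{\mathbf u}(t)$, and matching the two drift expressions reduces \eqref{bsde1} to the scalar ODE $\varphi'(t) = -\big(\inf_{\mathbf u\in U}\psi^{\mathbf u}(t)\big)\varphi(t)$ with $\varphi(T)=1$, whose unique solution is precisely \eqref{V}. By uniqueness in Theorem~\ref{MainResult} the candidate is the solution, and \eqref{eqn:VisWI} then gives $V_t = e^{\eta \overline{Y}^{\mathbf 0}_t e^{rT}} W^{\mathbf 0}_t = \varphi(t)$.

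I expect the main obstacle to be bookkeeping rather than conceptual: the algebraic collapse of the generator integrand must be carried out carefully, the cancellation hinging on the identity $\Theta + W^{\mathbf 0}_{t^-} = W^{\mathbf 0}_{t^-}e^{\eta e^{r(T-t)}z}$, and one must confirm that the $\widetilde m^0$-compensation of the jump integral exactly reproduces the $\int \Theta\,\lambda^0 F(\ud z)$ term inside $f$ without double counting. A secondary point to check is that $M\equiv 0$ is legitimate, i.e.\ that a constant intensity leaves the claim measure as the only driver; this is automatic here, since the candidate with $M=0$ already solves the BSDE and uniqueness then forces the orthogonal component to vanish.
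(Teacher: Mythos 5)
Your proposal is correct and takes essentially the same route as the paper: the same candidate $W^{\mathbf 0}_t=\varphi(t)\,e^{-\eta \overline{Y}^{\mathbf 0}_t e^{rT}}$ with $\Theta(t,z)=W^{\mathbf 0}_{t^-}\bigl(e^{\eta z e^{r(T-t)}}-1\bigr)$ from \eqref{theta} and $M\equiv 0$, the same collapse of the generator to $W^{\mathbf 0}_{t^-}\bigl[K(t)-\psi^{\mathbf u}(t)\bigr]$, and the same ODE $\varphi'(t)+\varphi(t)\inf_{\mathbf u\in U}\psi^{\mathbf u}(t)=0$, $\varphi(T)=1$, with optimality of a deterministic minimizer concluded via Theorem \ref{MainResult}. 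Your only deviation is presentational (and slightly tighter): you verify the candidate and invoke uniqueness of the BSDE solution, whereas the paper posits a deterministic differentiable $V_t=\varphi(t)$ and obtains the ODE and $M\equiv 0$ by comparing the two expressions for $\ud W^{\mathbf 0}_t$.
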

\begin{proof} 
When the claim arrival intensity under $\P^0$ is constant, we get that $V_t= \varphi(t)$, with $\varphi(t)$ being a deterministic function of time to be determined. We look for $\varphi(t)$ differentiable, then by It\^o's product rule
\begin{equation*}
\ud W^0_t = \ud (e^{-\eta \overline{Y}^{\mathbf 0}_t e^{rT}} \varphi(t)) = \varphi'(t) e^{-\eta \overline{Y}^{\mathbf 0}_t e^{rT}}\ud t + \varphi(t) \ud(e^{-\eta \overline{Y}^{\mathbf 0}_t e^{rT}}).
\end{equation*}
Recalling that $\overline{Y}^{\mathbf 0}_t= - \int_0^t \int_0^{+\infty} e^{-rs} z m(\ud s, \ud z)$, we get 
\begin{equation}\label{1}
\ud W^0_t = e^{-\eta \overline{Y}^{\mathbf 0}_t e^{rT}} \varphi'(t) \ud t +  e^{-\eta \overline{Y}^{\mathbf 0}_{t^-} e^{rT}} \varphi(t) 
\int_0^{+\infty} (e^{\eta z e^{r(T-t)}} -1) m(\ud t, \ud z). 
\end{equation}
On the other hand by Theorem \ref{MainResult}
\begin{equation}\label{2}
\ud W^0_t = \int_0^{+ \infty} \Theta (t, z) \widetilde m^{0}(\ud t, \ud z) + \ud M_t + \esssup_{\mathbf u \in \mathcal U}f ( t, W^0_{t^-} , \Theta(t,\cdot),\mathbf u_t)\,\ud t.
\end{equation}
 From \eqref{theta} we have that
 \begin{equation*}
\Theta (t, z) = e^{-\eta \overline{Y}^{\mathbf 0}_{t^-} e^{rT}} \varphi(t) 
 (e^{\eta z e^{r(T-t)}} -1) 
\end{equation*}
which plugging in \eqref{1} implies
\begin{equation}\label{3}
\ud W^0_t = e^{-\eta \overline{Y}^{\mathbf 0}_t e^{rT}} \varphi'(t) \ud t +  
\int_0^{+\infty} \Theta(t,z) m(\ud t, \ud z). 
\end{equation}
We observe that
\begin{align*}
f(t,\mathbf u_t) = & \int_0^{+\infty} \Theta(t,z) \lambda^0 F(\ud z) 
- e^{-\eta \overline{Y}^{\mathbf 0}_t e^{rT}} \varphi(t)\Big [\eta e^{\eta r (T-t)}(c_1(u_t^\a) + c_2(u_t^\b)) -  
\\
& + \gamma^\a(u_t^\a) \lambda^0\int_0^{+\infty}  (e^{\eta e^{r(T-t)}\gamma^\b(u^\b_t) z} -1 )  F(\ud z) \Big],
\end{align*}
and so comparing \eqref{3} with \eqref{2} we get that $M_t=0$, for any $t\in[0,T]$, and $\varphi(t)$ has to solve the following ODE
\begin{equation}\label{ODE}
    \varphi'(t) + \varphi(t) \inf_{\mathbf u  \in [0,\zeta_1] \times [0,\zeta_2]} \psi^{\mathbf u}(t)=0, \quad \forall t \in [0,T),
\end{equation}
with final condition $\varphi(T)=1$,
whose solution is given by \eqref{V}. 
We observe that under the continuity assumption on the functions $c_i(u^{(i)})$, $\gamma^{(i)}(u^{(i)})$, $i=1,2$,  the existence of a minimizer in \eqref{ODE} is ensured. Finally, again by Theorem \ref{MainResult}, since every deterministic function of time $\mathbf u^*(t)$ that satisfies \eqref{psi} realizes the essential supremum in \eqref{bsde1}, the proof is concluded. 
\qedsymbol{}
\end{proof}

\begin{remark}
    We observe that in the case of constant underlying claim arrival intensity, we find an explicit solution $(W^{\mathbf 0}, \Theta, M)$ to the BSDE \eqref{bsde1}, which is given by $$
    W^{\mathbf 0}_t=
\varphi(t) e^{-\eta \overline{Y}^{\mathbf 0}_t e^{rT}},\ \Theta (t, z) = e^{-\eta \overline{Y}^{\mathbf 0}_{t^-} e^{rT}} \varphi(t) 
 (e^{\eta z e^{r(T-t)}} -1),\ M_t=0, \quad t \in [0,T]$$ with $\varphi(t)$ given in \eqref{V}.
 \end{remark}

In the next proposition, we focus on the case where the claim arrival intensity is bounded, and we provide sufficient conditions for the convexity of $\psi^{\mathbf u}(t,y)$ as defined in \eqref{psiu}. These conditions enable us to characterize the optimal prevention strategy via first-order conditions. For simplicity, we assume a linear self-insurance impact function given by $\gamma^{\b}(u^\b) = 1 - u^\b$, for $u^\b \in [0,1]$ (i.e., $\zeta_2 = 1$), and we denote the self-protection impact function $\gamma^\a(u^\a)$ simply by $\gamma(u^\a)$.

\begin{proposition}
    Let us assume 
    \begin{itemize}
        \item [(i)]
    bounded intensity: $\Gamma(t,y) \leq \Lambda \in \R^+$, for any $(t,y) \in [0,T] \times \R$;
    
    \item [(ii)] convexity of the cost functions and the self-protection impact function: 
    $$c_1''(u^\a) \ge 0,\ \gamma''(u^\a) > 0, \quad \forall u^\a \in [0,\zeta_1], \quad c_2''(u^\b) \ge 0,\ \forall u^\b \in [0,1];
    $$  
     \item [(iii)] log-convexity condition of the self-protection impact function:
    \begin{equation}\label{gamma_conv}
        \frac{{\gamma}''(u^\a)}{\gamma(u^\a)} \ge \left(\frac{{\gamma}'(u^\a)}{\gamma(u^\a)}\right)^2, \quad \forall u^\a \in [0,\zeta_1],
   \end{equation}
    \item [(iv)] growth bound on ${\gamma}''(u^\a)$:
\begin{equation}\label{gamma_2der_bound}
     {\gamma}''(u^\a) \leq \frac{\eta}{\Lambda} c_1''(u^\a), \quad \forall u^\a \in [0, \zeta_1].  
   \end{equation}
    \end{itemize}
       Then, for every $(t,y) \in [0,T]\times \mathbb{R}$, the function $\psi^{\mathbf u}(t,y)$ in \eqref{psiu} is strictly convex in $\mathbf u \in U=[0,\zeta_1] \times [0,\zeta_2]$  and there exists a unique optimal prevention strategy $\mathbf{u}^*_t= \mathbf{u}^*(t,Y_{t^-})$ where $\mathbf{u}^*(t,y)=(u^{\a,*}(t,y), u^{\b,*}(t,y))$, with 
    $u^{(i),*}(t,y) = \min\{ \max\{ 0, \bar u^{(i)}(t,y)\}, \zeta_i\}$, $i=1,2$
    being $\bar{\mathbf u}(t,y) = (\bar u^\a(t,y), \bar u^\b(t,y))$ the unique solution to the first order conditions 
    \begin{align*}
\frac{\partial\psi^{\mathbf u}}{\partial u^\a}(t,y)= & \eta e^{r(T-t)}c_1'(u^\a)+\Gamma(t,y) {\gamma}'(u^\a) \int_0^{+\infty}\!\!\!\left(e^{\eta(1-u^\b)ze^{r(T-t)}}-1\right)F(\ud z)=0, \\
  \frac{\partial\psi^{\mathbf u}}{\partial u^\b}(t,y)
   = & \eta e^{r(T-t)}c_2'(u^\b) + \\ & - \eta e^{r(T-t)} \Gamma(t,y) {\gamma}(u^\a)\int_0^{+\infty}\!\!\left( z e^{\eta(1-u^\b)ze^{r(T-t)}}-1\right)F(\ud z)=0. 
    \end{align*}
    \end{proposition}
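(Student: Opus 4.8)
The plan is to establish strict convexity of $\mathbf u\mapsto\psi^{\mathbf u}(t,y)$ by showing that its Hessian is positive definite, and then to deduce existence, uniqueness and the first-order characterization of the minimizer by standard convex-optimization arguments on the compact box $U$, finally invoking Proposition \ref{Mark} for optimality. Throughout I fix $(t,y)$ and abbreviate $\kappa:=\eta e^{r(T-t)}$ (so $\kappa\ge\eta$) and $\Gamma:=\Gamma(t,y)\in(0,\Lambda]$. Since $\gamma^\b(u^\b)=1-u^\b$, it is convenient to pass to the affine variable $v:=1-u^\b\in[0,1]$; because $u^\b\mapsto v$ is affine and invertible, joint convexity in $(u^\a,u^\b)$ is equivalent to joint convexity in $(u^\a,v)$, and in these variables
\[
\psi(u^\a,v)=\kappa\,c_1(u^\a)+\kappa\,c_2(1-v)+\Gamma\,\gamma(u^\a)\,h(v),\qquad h(v):=\int_0^{+\infty}\bigl(e^{\kappa v z}-1\bigr)F(\ud z).
\]
The integrability in \eqref{Weak} legitimizes differentiation under the integral sign, giving $h\ge0$, $h'(v)=\kappa\int_0^{+\infty}z\,e^{\kappa vz}F(\ud z)>0$ and $h''(v)=\kappa^2\int_0^{+\infty}z^2 e^{\kappa vz}F(\ud z)>0$.

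First I would compute the Hessian
\[
\mathcal H=\begin{pmatrix}\kappa c_1''+\Gamma\gamma'' h & \Gamma\gamma' h'\\ \Gamma\gamma' h' & \kappa c_2''+\Gamma\gamma h''\end{pmatrix}.
\]
The crucial preliminary observation is that condition (iv) together with $\gamma''>0$ forces $c_1''\ge\frac{\Lambda}{\eta}\gamma''>0$ \emph{strictly}; hence the $(1,1)$ entry is bounded below by $\kappa c_1''>0$, which settles the leading principal minor. For the determinant I would combine two sharp inequalities: the log-convexity hypothesis (iii), rewritten as $(\gamma')^2\le\gamma\gamma''$, and a Cauchy--Schwarz inequality for the positive measure $e^{\kappa vz}F(\ud z)$, namely $(h')^2\le h''\int_0^{+\infty}e^{\kappa vz}F(\ud z)=h''\,(h+1)$. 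Substituting these into $\det\mathcal H=(\kappa c_1''+\Gamma\gamma'' h)(\kappa c_2''+\Gamma\gamma h'')-\Gamma^2(\gamma')^2(h')^2$ and expanding, the term $\Gamma^2\gamma\gamma'' h h''$ cancels and one is left with
\[
\det\mathcal H\ \ge\ \kappa^2 c_1''c_2''+\kappa\Gamma\,c_2''\gamma'' h+\Gamma\gamma h''\bigl(\kappa c_1''-\Gamma\gamma''\bigr).
\]

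The remaining point is to sign the last bracket, and this is where boundedness (i) and condition (iv) enter jointly: $\Gamma\gamma''\le\Lambda\gamma''\le\eta c_1''\le\kappa c_1''$, so $\kappa c_1''-\Gamma\gamma''\ge0$. As the three summands are then nonnegative and $c_1''>0$, $\gamma>0$, $h''>0$, the determinant is strictly positive (for $t<T$ with $r>0$ the bracket is already strict, while for a non-degenerate claim-size law $F$ the Cauchy--Schwarz step is strict), so $\mathcal H$ is positive definite and $\psi^{\mathbf u}(t,y)$ is strictly convex on $U$. I expect this determinant estimate to be the main obstacle: the spurious $+1$ produced by Cauchy--Schwarz would by itself destroy the log-convexity of $h$ (indeed the bare product $\gamma(u^\a)h(v)$ is not jointly convex), and it is resolved precisely because (iv) lets the genuinely convex cost $c_1$ absorb the deficit through the term $\kappa c_1''-\Gamma\gamma''$.

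Finally, strict convexity of the continuous function $\psi^{\mathbf u}(t,y)$ on the nonempty compact convex set $U$ yields, by Weierstrass' theorem and strict convexity, a unique minimizer $\mathbf u^*(t,y)$. Its characterization follows from the Karush--Kuhn--Tucker conditions for box constraints: if the unconstrained stationary point $\bar{\mathbf u}(t,y)$ solving $\nabla_{\mathbf u}\psi=0$ (the two displayed first-order equations) lies in $U$ it is the minimizer, and otherwise the active bounds clip each coordinate, which is summarized by $u^{(i),*}(t,y)=\min\{\max\{0,\bar u^{(i)}(t,y)\},\zeta_i\}$, $i=1,2$, with uniqueness of $\bar{\mathbf u}$ inherited from strict convexity. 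Since $\mathbf u^*(t,y)$ minimizes $\psi^{\mathbf u}(t,y)$ for every $(t,y)$, Proposition \ref{Mark} then guarantees that $\mathbf u^*_t=\mathbf u^*(t,Y_{t^-})$ is an optimal (Markovian) prevention strategy, completing the argument.
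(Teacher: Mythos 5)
Your proposal is correct and takes essentially the same route as the paper's proof: both establish positive definiteness of the Hessian of $\psi^{\mathbf u}(t,y)$ by combining the Cauchy--Schwarz inequality for the measure $e^{\eta(1-u^\b)z e^{r(T-t)}}F(\ud z)$, the log-convexity bound $(\gamma')^2\le\gamma\gamma''$ from (iii), and the growth bound (iv) (via $\Gamma\gamma''\le\Lambda\gamma''\le\eta c_1''\le \eta e^{r(T-t)}c_1''$) to absorb the deficit caused by $\int_0^{+\infty}e^{\eta(1-u^\b)ze^{r(T-t)}}F(\ud z)=h+1$, and then conclude through Proposition \ref{Mark}. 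Your substitution $v=1-u^\b$ and explicit determinant expansion are a cosmetic variant of the paper's chained estimate $\bigl(\partial^2_{u^\a u^\b}\psi\bigr)^2\le \partial^2_{(u^\a)^2}\psi\,\partial^2_{(u^\b)^2}\psi$, and your remarks on exactly where strictness holds (and that (iv) forces $c_1''>0$) are, if anything, slightly more careful than the paper's own treatment.
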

\begin{proof}
To prove the result, we show that the Hessian matrix of the function $\psi^{\mathbf u}(t,y)$ is positive definite, for each $(t,y) \in [0,T]\times \mathbb{R}$. For every $(t,y) \in [0,T]\times \mathbb{R}$, the second order derivatives of $\psi^{\mathbf u}(t,y)$ are given by
\begin{align}
    \frac{\partial^2\psi^{\mathbf u}}{\partial (u^\b)^2}(t,y)&=\eta e^{r(T-t)}c_2''(u^\b)+\eta^2 e^{2r(T-t)} \Gamma(t,y) \gamma (u^\a) \int_0^{+\infty}\!\!\! z^2 e^{\eta(1-u^\b)ze^{r(T-t)}}F(\ud z), \label{der_psi_u}\\
  \frac{\partial^2\psi^{\mathbf u }}{\partial (u^\a)^2}(t,y)&= \eta e^{r(T-t)}c_1''(u^\a)+ \Gamma(t,y) \gamma''(u^\a) \int_0^{+\infty}\left(e^{\eta(1-u^\b)ze^{r(T-t)}}-1\right)F(\ud z), \label{der_psi_e}\\
 \frac{\partial^2\psi^{\mathbf u}}{\partial u^\b \partial u^\a}(t,y)&= \frac{\partial^2\psi^{\mathbf u,}}{\partial u^\a \partial u^\b}(t,y)=-\Gamma(t,y) \gamma'(u^\a) \int_0^{+\infty}\!\!\!\!\eta z e^{r(T-t)} e^{\eta(1-u^\b)ze^{r(T-t)}}\!F(\ud z).
\end{align}   
Note that \eqref{der_psi_u} and \eqref{der_psi_e} are positive, for every $(t,y) \in [0,T]\times \mathbb{R}$. Moreover, by the Cauchy-Schwarz inequality combined with \eqref{gamma_conv} and \eqref{gamma_2der_bound}, we have that
\begin{align*}
  &\left( \frac{\partial^2\psi^{\mathbf u}}{\partial u^\a \partial u^\b}(t,y)\right)^2\\
  &= (\gamma'(u^\a))^2 \left(\int_0^{+\infty}\!\!\!z\eta e^{r(T-t)}e^{\frac{1}{2}\eta(1-u^\b)ze^{r(T-t)}} \Gamma^{\frac{1}{2}}(t,y)\cdot e^{\frac{1}{2}\eta(1-u^\b)ze^{r(T-t)}}\Gamma^{\frac{1}{2}}(t,y) F(\ud z)  \right)^2\\
  & \leq \gamma''(u^\a) \gamma(u^\a)\int_0^{+\infty}z^2\eta^2 e^{2r(T-t)}e^{\eta(1-u^\b)ze^{r(T-t)}}\Gamma(t,y) F(\ud z) \\
  & \qquad \qquad \cdot \int_0^{+\infty}e^{\eta(1-u^\b)z e^{r(T-t)}}\Gamma(t,y) F(\ud z)\\
  & \leq  \frac{\partial^2\psi^{\mathbf u}}{\partial (u^\b)^2}(t,y) \cdot \int_0^{+\infty}e^{\eta(1-u)ze^{r(T-t)}}\gamma''(u^\a) \Gamma(t,y) F(\ud z)\\
  & \leq \frac{\partial^2\psi^{\mathbf u}}{\partial (u^\b)^2}(t,y) \cdot \frac{\partial^2\psi^{\mathbf u}}{\partial (u^\a)^2}(t,y), \quad \forall (t,y) \in [0,T]\times \mathbb{R},
\end{align*}
which implies that the determinant of the Hessian matrix of the function $\psi^{\mathbf u}(t,y)$ is positive, yielding that the Hessian matrix is positive definite, for every $(t,y) \in [0,T]\times \mathbb{R}$.
The proof is completed by applying  Proposition \ref{Mark}.
\end{proof}
\begin{remark}
The log-convexity condition \eqref{gamma_conv} not only shapes the behavior of \( \gamma(u^\a) \) 
but also establishes a stronger and more precise relationship between its first and second derivatives. 
Conversely, the growth bound \eqref{gamma_2der_bound} ensures that \( \gamma''(u^\a) \) remains controlled, 
effectively bounding the curvature of \( \gamma(u^\a) \) in relation to \( c_1''(u^\a) \).
\end{remark}

\begin{example}
   We show some examples of functions \( \gamma(x) \) and \( c_1(x) \) satisfying all required conditions.
To ensure all conditions, including the growth bound, we consider the following choices:
\begin{itemize}
\item[(i)] 
$\gamma(x) = e^{-\alpha x}, \quad c_1(x) = x^2, \quad \text{with} \quad 0<\alpha < \sqrt{\frac{2 \eta }{\Lambda}}, \quad x \in [0, \zeta_1]$.
\item[(ii)] Assume $\Lambda < \eta$,
\[
\gamma(x) = \frac{1}{1+x}, \quad c_1(x) = (1+x)^2-1, \quad \text{with} \quad x \in [0, \zeta_1].
\]
\end{itemize}
\end{example}

\section*{Acknowledgements}
The authors are members of Gruppo Nazionale per l’Analisi Matematica, la Probabilità e le loro
Applicazioni (GNAMPA) of Istituto Nazionale di Alta Matematica (INdAM). The first author was partially supported by European Union-Next Generation EU - PRIN research project n. 2022BE-MMLZ and by European Union-Next Generation EU - PRIN PNRR CUP B53D23027770001 - project n. P20225SP685. The second author was partially supported by  the European Union-Next Generation EU - PRIN research project n. 2022FPLY97.

\bigskip

\bibliographystyle{plainnat}
\bibliography{references_CC}

\appendix
\section{Proof of Theorem  \ref{thm:bsde}} 
\label{app:proof}

\noindent We provide here the detailed proof of Theorem \ref{thm:bsde}, which establishes the existence of a solution to the BSDE  \eqref{bsde} under the stated assumptions.

\begin{proof} 
We verify that conditions $\mathbf{(F1)}$-$\mathbf{(F5)}$ in  \cite[Theorem 3.5]{PPS2018} are satisfied.  
\begin{itemize}
\item[$\mathbf{(F1)}$] The process $\{C_t := \int_0^t\int_0^{+\infty} z \widetilde m^0(\ud s, \ud z),\ t \in [0,T]\}$ is an $(\bF, \P^0)$-martingale such that $\sup_{t\in [0,T]}\mathbb{E}^{\P^0}[C^2_t] < +\infty.$ Indeed, for any $t \in [0,T]$
$$\mathbb{E}^{\P^0}[C^2_t] = \mathbb{E}^{\P^0}\left [\int_0^t\int_0^{+\infty} z^2 \lambda^0_t F(\ud z) \ud t \right] \leq \mathbb{E}^{\P^0}[Z^2] \mathbb{E}^{\P^0}\left [\int_0^T\lambda^0_t \ud t \right ]< +\infty.
$$
The disintegration property is satisfied because $K^\omega_t(\ud z) = \lambda^0_{t}(\omega) F(\ud z).$ Here, $K^{\omega}$ denotes the transition kernel
on $(\Omega \times [0,T], {\mathcal P})$, with ${\mathcal P}$ representing the $\bF$-predictable sigma-algebra on $\Omega \times [0,T]$.

\item[$\mathbf{(F2)}$] For a given $\widehat \beta>0$, the terminal condition of the BSDE satisfies $\mathbb E^{\P^0} \left[ e^{\widehat \beta A_T} \Xi^2 \right]<+ \infty.$ The process $A=\{A_t,\ t \in [0,T]\}$ is defined in \eqref{A} below and $A_T \leq \int_0^T 4 \lambda^0_t \ud t+ \bar K$, with $\bar K>0$ being a suitable constant (see ($\mathbf{F4}$) for details). Indeed, exploiting the trivial relation $ab \leq \frac{1}{2} (a^2 + b^2)$, for any $a,b \in \mathbb{R}$, in view of \eqref{GEN}, we get
\begin{align*}
\mathbb E^{\P^0} \left[ e^{\widehat \beta A_T} \Xi^2 \right] 
&\le \frac{1}{2}\mathbb E^{\P^0}\left[ e^{ 2 \widehat \beta A_T}\right] + \frac{1}{2}\mathbb E^{\P^0} \left[\Xi^4 \right] \\
&\leq K \mathbb E^{\P^0} \left[ e^{ 8\widehat \beta \int_0^T  \lambda^0_{s}\ud s }\right] +  \frac{1}{2}\mathbb E^{\P^0} \left[e^{4\eta e^{rT} J_T} \right] < + \infty 
\end{align*} 
for a suitable constant $K>0$.

\item[$\mathbf{(F3)}$] The generator 
$F(t, \omega, y, \theta  ( \cdot)):= \esssup_{\mathbf{u} \in \mathcal{U}} f ( t, \omega, y, \theta  ( \cdot), \mathbf u_t )$ defined on the space $$\mathbb{M} = \{ (t,\omega,y,\theta(\cdot)): (t,\omega,y)\in[0,T]\times\Omega\times (0,+\infty),\,
    \theta(\cdot): [0,+\infty)\to\mathbb{R}\ \textrm{measurable} \}$$
satisfies a stochastic Lipschitz condition, i.e., there exist two positive $\mathbb{F}$-predictable processes $\gamma=\{\gamma_t,\ t \in [0,T]\}$, $\bar{\gamma}=\{\bar \gamma_t,\ t \in [0,T]\}$ such that 
\begin{equation}\label{eqn;stochlip}
\big| F(t, \omega, y, \theta  ( \cdot) ) - F(t, \omega, y', \theta' (\cdot) ) \big| ^2
	\leq \gamma_t (\omega ) |y - y'|^2
	+\bar{\gamma}_t (\omega ) \left( ||| \theta ( \cdot) - \theta' ( \cdot) |||_t (\omega ) \right)^2 ,
\end{equation}
where:
\begin{equation*}
\begin{split}
\left( ||| \theta ( \cdot) |||_t (\omega ) \right)^2 := \int_0^t \theta^2(z) K_t^\omega(\ud z) = \int_0^{+\infty} \theta^2(z) \lambda^0_{t}(\omega) F(\ud z).
\end{split}
\end{equation*}
Exploiting the definition of $F$, we need to deal with the $\P^0$-essential supremum:
\begin{align*}
&\bigl| F (t, \omega, y, \theta  ( \cdot,) ) - F(t, \omega, y', \theta' (\cdot )) \bigr| ^2 \\
	& \qquad \qquad \leq \left( \esssup_{\mathbf{u} \in \mathcal{U}} \bigl| f ( t, \omega, y, \theta  ( \cdot), \mathbf u_t(\omega) ) -   f ( t, \omega, y', \theta'  ( \cdot), \mathbf u_t(\omega) ) \bigr| \right)^2,
\end{align*}
and we preliminarily compute for any $\mathbf u \in \mathcal{U}$ 
\begin{align}
& \bigl| f( t, y, \theta  ( \cdot), \mathbf u_t  ) -  f( t, y', \theta'  ( \cdot), \mathbf u_t) \bigr| \nonumber\\
& = \big| (y- y') [\lambda^0_{t^-} + \eta e^{r (T-t)} (c_1(
u^\a_t) + c_2(u^\b_t))]+  \int_{0}^{+\infty} \!\!\! \left( \theta(z) - \theta'(z)\right)  \gamma^\a(u_t^\a) \lambda^0_{t} F(\ud z) \big| \nonumber\\
& \le  \bigl| y- y' \bigr| (\lambda^0_{t^-} + \bar k)  +  2 \lambda^0_{t^-} \int_{0}^{+\infty} \left| \theta(z) - \theta'(z) \right| F (\ud z),\label{nuova2}
\end{align}
\noindent where
\begin{equation*}\label{bar k}
\bar k = \eta e^{rT}(c_1(\zeta_1) + c_2(\zeta_2)).
\end{equation*}

\noindent Now, from \eqref{nuova2}  and  the relation $ (a+ b )^2 \leq 2 (a^2 + b^2)$, with $a,b \in \R$, we find:
\begin{align*}
&\bigl| F (t, \omega, y, \theta  ( \cdot,) ) - F(t, \omega, y', \theta' (\cdot )) \bigr| ^2 \\
& \qquad \qquad \leq 2 \bigl| y- y' \bigr|^2 (\lambda^0_{t}(\omega) + \bar k)^2  +  2 \left(\int_{0}^{+\infty} \!\! \left| \theta(z) - \theta'(z) \right| \lambda^0_{t}(\omega) F (\ud z)\right)^2.
\end{align*}
\noindent Finally, we apply Jensen's inequality and we get
that \eqref{eqn;stochlip} is satisfied with
$\gamma_t=2(\lambda^0_{t^-} + \bar k)^2$ and $\bar{\gamma}_t=2\lambda^0_{t}$.
\item[$\mathbf{(F4)}$] The process $A$ defined as
\begin{equation}\label{A}
A_t := \int_{0}^{t} \alpha _{s}^2 \,ds, \quad \alpha_s^2 =  \max\{ \sqrt{\gamma_t}, \bar \gamma_t\}
\end{equation}
is such that for any $t\in [0,T]$, $A_t    \leq 2 \int_{0}^{T} \lambda^0_s \,ds + \bar K$, with $\bar K>0$ suitable constant and $\Delta A_t \leq \Phi, \P^0-$a.s. holds true for any $\Phi >0$ since $A$ has no jumps.

\item[$\mathbf{(F5)}$] For a given $\widehat\beta >0$  we have that
\begin{equation*}
\mathbb{E}^{\P^0} \left[  \int_0^T e^{\widehat\beta A_t} \frac{|F (t, 0, 0) |^2}{\alpha_t^2}  \ud t \right] < +\infty,
\end{equation*}
because $F (t,0,0) =0$.
\end{itemize}
It now remains to prove that there exists $\widehat \beta>0$ such that  
\begin{equation}\label{Mphi}
M^{\Phi}(\widehat \beta) = \frac{9}{\widehat\beta} + \frac{\Phi^2 (2 + 9 \widehat \beta)}{\sqrt{\widehat \beta^2 \Phi^2 +4} -2} \exp{\left( \frac{\widehat \beta \Phi + 2 -\sqrt{\widehat \beta^2 \Phi^2 +4}}{2} \right)} < \frac12,
\end{equation}
with $\Phi>0$ introduced in $\mathbf{(F4)}$. 
Thanks to \cite[Lemma 3.4]{PPS2018}, since $\lim_{\widehat \beta \rightarrow \infty} M^\Phi(\widehat \beta) = 9 e \Phi$, it suffices to take $\Phi < \frac{1}{18 e}$, in which case  $M^{\Phi}(\widehat \beta)<\frac12$ for $\widehat \beta$ sufficiently large.
According to \cite[Theorem 3.5]{PPS2018} there exists a solution  $(R,\Theta^{R}, M^R)\in \mathcal{L}^\b \times \widehat{\mathcal{L}} \times \mathcal{L}^{\perp}$  to BSDE \eqref{bsde1}, and this concludes the proof.
\qedsymbol{}
\end{proof}
\end{document}